\numberwithin{equation}{section}
\begin{document}
\theoremstyle{plain}
\newtheorem{thm}{Theorem}[section]
\newtheorem{lem}[thm]{Lemma}
\newtheorem{cor}[thm]{Corollary}
\newtheorem{cor*}[thm]{Corollary*}
\newtheorem{prop}[thm]{Proposition}
\newtheorem{prop*}[thm]{Proposition*}
\newtheorem{conj}[thm]{Conjecture}
\theoremstyle{definition}
\newtheorem{construction}{Construction}
\newtheorem{notations}[thm]{Notations}
\newtheorem{question}[thm]{Question}
\newtheorem{prob}[thm]{Problem}
\newtheorem{rmk}[thm]{Remark}
\newtheorem{remarks}[thm]{Remarks}
\newtheorem{defn}[thm]{Definition}
\newtheorem{claim}[thm]{Claim}
\newtheorem{assumption}[thm]{Assumption}
\newtheorem{assumptions}[thm]{Assumptions}
\newtheorem{properties}[thm]{Properties}
\newtheorem{exmp}[thm]{Example}
\newtheorem{comments}[thm]{Comments}
\newtheorem{blank}[thm]{}
\newtheorem{observation}[thm]{Observation}
\newtheorem{defn-thm}[thm]{Definition-Theorem}
\newtheorem*{Setting}{Setting}

\newcommand{\sA}{\mathscr{A}}
\newcommand{\sB}{\mathscr{B}}
\newcommand{\sC}{\mathscr{C}}
\newcommand{\sD}{\mathscr{D}}
\newcommand{\sE}{\mathscr{E}}
\newcommand{\sF}{\mathscr{F}}
\newcommand{\sG}{\mathscr{G}}
\newcommand{\sH}{\mathscr{H}}
\newcommand{\sI}{\mathscr{I}}
\newcommand{\sJ}{\mathscr{J}}
\newcommand{\sK}{\mathscr{K}}
\newcommand{\sL}{\mathscr{L}}
\newcommand{\sM}{\mathscr{M}}
\newcommand{\sN}{\mathscr{N}}
\newcommand{\sO}{\mathscr{O}}
\newcommand{\sP}{\mathscr{P}}
\newcommand{\sQ}{\mathscr{Q}}
\newcommand{\sR}{\mathscr{R}}
\newcommand{\sS}{\mathscr{S}}
\newcommand{\sT}{\mathscr{T}}
\newcommand{\sU}{\mathscr{U}}
\newcommand{\sV}{\mathscr{V}}
\newcommand{\sW}{\mathscr{W}}
\newcommand{\sX}{\mathscr{X}}
\newcommand{\sY}{\mathscr{Y}}
\newcommand{\sZ}{\mathscr{Z}}
\newcommand{\bZ}{\mathbb{Z}}
\newcommand{\bN}{\mathbb{N}}
\newcommand{\bQ}{\mathbb{Q}}
\newcommand{\bC}{\mathbb{C}}
\newcommand{\bR}{\mathbb{R}}
\newcommand{\bH}{\mathbb{H}}
\newcommand{\bD}{\mathbb{D}}
\newcommand{\bE}{\mathbb{E}}
\newcommand{\bV}{\mathbb{V}}
\newcommand{\bfM}{\mathbf{M}}
\newcommand{\bfN}{\mathbf{N}}
\newcommand{\bfX}{\mathbf{X}}
\newcommand{\bfY}{\mathbf{Y}}
\newcommand{\spec}{\textrm{Spec}}
\newcommand{\dbar}{\bar{\partial}}
\newcommand{\redref}{{\color{red}ref}}

\title[MacPherson's Conjecture] {MacPherson's Conjecture via H\"ormander Estimate}

\author[Junchao Shentu]{Junchao Shentu}
\email{stjc@ustc.edu.cn}
\address{School of Mathematical Sciences,
	University of Science and Technology of China, Hefei, 230026, China}
\author[Chen Zhao]{Chen Zhao}
\email{czhao@ustc.edu.cn}
\address{School of Mathematical Sciences,
	University of Science and Technology of China, Hefei, 230026, China}

\begin{abstract}
In this notes we reprove MacPherson's conjecture on $L^2-(n,q)$-cohomology through Demailly's formulation of H\"ormander's Estimate. This approach allows us to weaken the condition of locally semipositivity in Ruppenthal's $L^2$-representation of adjoint bundle. Moreover we prove the MacPherson's conjecture of twisted coefficient bundle under an arbitrary singular hermitian metric. As applications, we study the MacPherson type problem for pluri-canonical bundle.
\end{abstract}

\maketitle

\section{Introduction}
The $L^2$ technique has become an important part of complex geometry since the works of L. H\"ormander \cite{Hormander1965} and Andreotti-Vesentini \cite{AV1965}. Whereas the theory is well established on complex manifolds, not much is known on singular complex analytic spaces. Motivated by Cheeger-Goresky-MacPherson's conjectural isomorphism between the $L^2$-cohomology and the intersection cohomology of a projective variety \cite{CGM1982}, R. MacPherson \cite{MacPherson1983} conjectured the birational invariance property of $L^2$-arithmetic genus:
\begin{align}\label{align_Mac_conj}
\chi_{(2)}(X_{\rm reg}):=\sum_{q=0}^n(-1)^q\dim H^{0,q}_{(2)}(X_{\rm reg},ds^2_{\rm FS})=\chi(M)
\end{align}
where $\pi:M\to X$ is any resolution of singularities. This conjecture is proved by Pardon-Stern using Donnelly-Fefferman's estimate.
\begin{thm}[Pardon-Stern \cite{Pardon_Stern1991}]\label{thm_PS}
	If $X$ is a complex projective variety of pure dimension $n$ and $X_{\rm reg}$ is given the hermitian metric induced by the embedding of $X$ into the projective space, then for any resolution of singularities $\pi:M\to X$, for all $q$ there is a natural isomorphism
	$$H^{n,q}(M)\simeq H^{n,q}_{(2),\max}(X_{\rm reg}).$$
\end{thm}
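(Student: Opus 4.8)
The plan is to pass through a sheaf-theoretic comparison on the (possibly singular) base $X$, realizing both sides as the hypercohomology of a single complex of sheaves. First I would introduce the $L^2$-Dolbeault sheaves $\sL^{n,q}$ on $X$ whose sections over an open set $U\subseteq X$ are the measurable $(n,q)$-forms on $U\cap X_{\rm reg}$ that are locally square-integrable together with their distributional $\dbar$ (the maximal, or Gaffney, extension) with respect to the Fubini-Study metric. These sheaves are modules over the sheaf of smooth functions on $X_{\rm reg}$ and admit partitions of unity, hence are fine; since $X$ is compact the local $L^2$ condition globalizes, so the cohomology of the complex of global sections $(\sL^{n,\bullet}(X),\dbar)$ is exactly $H^{n,q}_{(2),\max}(X_{\rm reg})$, and fineness gives $H^{n,q}_{(2),\max}(X_{\rm reg})\cong\mathbb{H}^q(X,\sL^{n,\bullet})$.

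The heart of the argument is to show that $(\sL^{n,\bullet},\dbar)$ is a resolution of the sheaf of $L^2$-holomorphic $n$-forms. Away from $X_{\rm sing}$ the complex is exact in positive degrees by the ordinary $\dbar$-Poincaré lemma. Near a singular point $x$ I would establish a \emph{local $L^2$-$\dbar$-Poincaré lemma}: every $L^2$ form $\alpha$ with $\dbar\alpha=0$ on a punctured neighborhood is locally of the form $\dbar\beta$ with $\beta$ again $L^2$. This is precisely where the H\"ormander / Donnelly-Fefferman estimate enters — one solves $\dbar\beta=\alpha$ with an $L^2$ bound on a small pseudoconvex piece of $X_{\rm reg}$ using a weight adapted to the degeneration of the Fubini-Study metric along $X_{\rm sing}$. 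I expect this step to be the main obstacle, because the incomplete metric near the singularities forces delicate control of the curvature term in the estimate and of the behaviour of forms across the exceptional locus of $\pi$.

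For the degree-zero identification I would use an $L^2$-extension argument: a holomorphic $n$-form on $X_{\rm reg}$ that is locally square-integrable near $X_{\rm sing}$ pulls back under $\pi$ to a holomorphic form on $M$ that extends holomorphically across the exceptional divisor. This identifies $\ker\bigl(\dbar:\sL^{n,0}\to\sL^{n,1}\bigr)$ with the direct image $\pi_*\omega_M$ of the canonical sheaf, i.e. the Grauert-Riemenschneider sheaf. Combined with the previous paragraph, this exhibits a quasi-isomorphism $(\sL^{n,\bullet},\dbar)\simeq\pi_*\omega_M$ in the derived category of sheaves on $X$.

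Finally I would run the Leray spectral sequence for $\pi$. The Grauert-Riemenschneider vanishing theorem gives $R^q\pi_*\omega_M=0$ for $q\geq 1$, so $\mathbb{H}^q(X,\pi_*\omega_M)\cong H^q(M,\omega_M)=H^{n,q}(M)$, the last equality by the Dolbeault isomorphism. Stringing together the identifications yields
\[
H^{n,q}_{(2),\max}(X_{\rm reg})\cong\mathbb{H}^q(X,\sL^{n,\bullet})\cong\mathbb{H}^q(X,\pi_*\omega_M)\cong H^{n,q}(M),
\]
and since each arrow is induced by a canonical map of complexes (pullback of forms, distributional $\dbar$, the Leray edge map), the composite is the asserted natural isomorphism, independent of the chosen resolution.
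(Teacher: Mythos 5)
Your overall architecture coincides with the paper's: sheafify the $L^2$-complex on $X$, check the sheaves are fine, prove local exactness of the complex, identify the degree-zero kernel with the Grauert--Riemenschneider sheaf $\pi_\ast K_M$, and finish with the Leray spectral sequence plus Grauert--Riemenschneider vanishing. The degree-zero identification and the final spectral-sequence step are correct essentially as you state them (the paper gets the degree-zero statement by a direct computation showing that, for $(n,0)$-forms, the $L^2$ condition for the degenerate pullback metric $\pi^\ast ds^2$ is equivalent to the $L^2$ condition for a smooth metric on $M$, which is the same content as your extension argument).

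The genuine gap is the step you yourself flag as ``the main obstacle'': the local $L^2$-$\dbar$-Poincar\'e lemma near a singular point. Invoking ``H\"ormander / Donnelly--Fefferman with a weight adapted to the degeneration'' is not yet an argument, and a naive attempt fails for two concrete reasons: $U\cap X_{\rm reg}$ with the Fubini--Study metric is an \emph{incomplete} K\"ahler manifold, so the classical a priori estimate does not apply as stated; and the coefficient bundle here is trivial with flat metric, so there is no positive curvature term to drive the estimate. The paper closes this gap with a specific two-part mechanism. First, $U\cap X_{\rm reg}$ admits an auxiliary \emph{complete} K\"ahler metric (Lemma~\ref{lem_complete_metric_exists_locally}, following Pardon--Stern), which is exactly what Demailly's formulation of the H\"ormander estimate (Proposition~\ref{prop_Hormander_incomplete}) requires --- it needs only that the manifold admit some complete K\"ahler metric, not that the metric $\omega$ entering the norms be complete. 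Second, the Fubini--Study metric on $U\cap X_{\rm reg}$ is quasi-isometric to $\sqrt{-1}\partial\dbar\Phi$ for a \emph{bounded} smooth strictly plurisubharmonic $\Phi$ (restrict a local ambient K\"ahler potential); twisting the coefficient metric by $e^{-C\Phi}$ produces curvature $\geq\omega$, and because $\Phi$ is bounded this twist changes neither the $L^2$ spaces nor the quasi-isometry class, so the vanishing obtained for the twisted metric is vanishing for the original one. This substitution of ``$\Phi$ bounded'' for the Donnelly--Fefferman requirement ``$|\partial\Phi|$ bounded'' is the paper's key observation (Theorem~\ref{thm_main_local1}, then globalized in Theorem~\ref{thm_main_global1} and specialized in Corollary~\ref{cor_main_hermitian_metric1}); with these two devices inserted at your second step, your outline becomes a complete proof.
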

(\ref{align_Mac_conj}) follows from Theorem \ref{thm_PS} by the $L^2$-Serre duality (c.f. \cite[Proposition 1.3]{Pardon_Stern1991})
$$H^{n,q}_{(2),\max}(X_{\rm reg})\simeq H^{0,n-q}_{(2),\min}(X_{\rm reg})^\ast.$$
This theorem is later generalized to compact hermitian spaces with a coefficient line bundle by J. Ruppenthal.
\begin{thm}[Ruppenthal \cite{Ruppenthal2014}]\label{thm_Ruppenthal}
	Let $X$ be a compact hermitian complex analytic space of pure dimension $n$. Let $\pi:M\to X$ be a resolution of singularities and $L$ be a holomorphic line bundle on $M$ which is locally semi-positive with respect to $X$. Then the pullback of forms induces the natural isomorphisms
	$$H^{n,q}(M,L)\simeq H^{n,q}_{(2),\max}(X_{\rm reg}, L|_{X_{\rm reg}})\quad \forall q\geq0.$$
\end{thm}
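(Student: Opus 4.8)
The plan is to realize both sides of the claimed isomorphism as sheaf cohomology groups on $X$ and to match them by a Grauert--Riemenschneider type vanishing, using Demailly's form of H\"ormander's estimate as the analytic engine for the local computations. Write $E=\pi^{-1}(X_{\rm sing})$ for the exceptional set, so that $\pi$ restricts to a biholomorphism $M\setminus E\xrightarrow{\sim} X_{\rm reg}$. On $X$ I would consider the complex of sheaves $\sL^{n,\bullet}_{(2)}$ whose sections over an open $U\subset X$ are the $L^2$ forms $u$ on $U\cap X_{\rm reg}$ (measured with the given hermitian metric $\omega$ on $X$ and the induced metric on $L$ via $\pi$) for which $\dbar u$ is again $L^2$, where $\dbar$ is taken in the maximal (distributional) sense. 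Each $\sL^{n,q}_{(2)}$ is a fine sheaf, being a module over the Lipschitz functions on $X$, which admit partitions of unity; since $X$ is compact, a section that is locally $L^2$ with locally $L^2$ differential is globally so, and the complex of global sections is precisely the domain complex of the maximal $\dbar$. Hence $H^{n,q}_{(2),\max}(X_{\rm reg},L)\cong\bH^q(X,\sL^{n,\bullet}_{(2)})$.

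The first key step is to compute the cohomology sheaves of $\sL^{n,\bullet}_{(2)}$. I claim $\sH^0(\sL^{n,\bullet}_{(2)})\cong\pi_*(K_M\otimes L)$ and $\sH^q(\sL^{n,\bullet}_{(2)})=0$ for $q>0$, both being local statements near points of $X_{\rm sing}$. For the degree-zero sheaf, an $L^2$ holomorphic $L$-valued $n$-form on $X_{\rm reg}$ pulls back to such a form on $M\setminus E$; because the $L^2$-norm of an $(n,0)$-form is independent of the metric on the base (the integrand $i^{n^2}u\wedge\bar u$, paired with the metric on $L$, is already a top form), this pullback is $L^2$ with respect to any smooth metric on $M$ and therefore extends across $E$ by the $L^2$ removable-singularity theorem for the canonical bundle, identifying the sections with $\pi_*(K_M\otimes L)$. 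For the vanishing in positive degrees I would work on a small neighborhood $\pi^{-1}(U)\subset M$, where the locally semipositivity hypothesis supplies a (possibly singular) metric on $L$ with semipositive curvature; equipping $M\setminus E$ with a suitable complete K\"ahler metric in Demailly's sense and applying the $(n,q)$-case of his $L^2$-estimate, in which the Bochner--Kodaira--Nakano curvature term $[i\Theta(L),\Lambda]$ is nonnegative, one solves $\dbar u=v$ with $L^2$ control for every local $\dbar$-closed $(n,q)$-form $v$ with $q>0$. This is the local exactness forcing $\sH^q(\sL^{n,\bullet}_{(2)})=0$.

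Granting these two computations, the hypercohomology spectral sequence $E_2^{p,q}=H^p(X,\sH^q(\sL^{n,\bullet}_{(2)}))$ degenerates at $E_2$ and yields $\bH^q(X,\sL^{n,\bullet}_{(2)})\cong H^q(X,\pi_*(K_M\otimes L))$. To finish I compare this with $H^{n,q}(M,L)=H^q(M,K_M\otimes L)$ through the Leray spectral sequence $H^p(X,R^q\pi_*(K_M\otimes L))\Rightarrow H^{p+q}(M,K_M\otimes L)$. The same semipositivity of $L$, now via the Grauert--Riemenschneider/Takegoshi relative vanishing (again an $(n,q)$ H\"ormander estimate, applied over the fibers of $\pi$), gives $R^q\pi_*(K_M\otimes L)=0$ for $q>0$, so this Leray spectral sequence also degenerates and $H^q(M,K_M\otimes L)\cong H^q(X,\pi_*(K_M\otimes L))$. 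Chaining the isomorphisms produces $H^{n,q}(M,L)\cong H^{n,q}_{(2),\max}(X_{\rm reg},L)$, and since at the level of forms each arrow is induced by $\pi^*$, the composite is the asserted pullback map.

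The main obstacle is the local exactness in the second step: one must engineer a complete K\"ahler metric on the punctured neighborhood $\pi^{-1}(U)\setminus E$ (or a regularizing family of such metrics) for which Demailly's estimate applies, while keeping the resulting $L^2$ solutions controlled in the original degenerate norm coming from $\pi^*\omega$. Handling the interplay between the singular metric on $L$ produced by semipositivity, the incompleteness of $\pi^*\omega$, and the precise maximal closure of $\dbar$ is the delicate point; it is exactly here that Demailly's formulation, rather than Donnelly--Fefferman's, should let one relax the semipositivity hypothesis as advertised. A secondary technical point is the $L^2$ extension across $E$ in the identification of $\sH^0$, which is cleanest when the exceptional divisor is simple normal crossing, so that the degenerate norm can be written down explicitly in adapted coordinates.
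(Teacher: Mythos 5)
Your overall architecture is the same as the paper's route to Ruppenthal's theorem (via Theorem \ref{thm_main_local1}, Theorem \ref{thm_main_global1} and Corollary \ref{cor_main_hermitian_metric1}): sheafify the maximal $L^2$ complex on $X$, identify the kernel sheaf with $\pi_\ast(K_M\otimes L)$ using the metric-independence of the $L^2$ condition on $(n,0)$-forms, prove local exactness in positive degrees by Demailly's form of H\"ormander's estimate on a complete K\"ahler metric on the punctured neighborhood, use fineness to identify hypercohomology with $H^{n,q}_{(2),\max}$, and finish with the Leray spectral sequence together with the relative vanishing $R^q\pi_\ast(K_M\otimes L)=0$ (the paper quotes Matsumura after a curvature twist; Takegoshi serves equally well when the local metric is smooth and semipositive). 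So the skeleton is right.

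However, the central analytic step has a genuine gap. You assert that because the locally supplied metric on $L$ has $[\sqrt{-1}\Theta(L),\Lambda]\geq 0$, one can ``solve $\dbar u=v$ with $L^2$ control.'' That is not what the estimate gives: Proposition \ref{prop_Hormander_incomplete} requires \emph{strict} positivity $\sqrt{-1}\Theta_h(L)\geq\omega$, where $\omega$ is the very K\"ahler form in which the norms are measured; with merely semipositive curvature the operator $T=[\sqrt{-1}\Theta_h(L),\Lambda_\omega]$ need not be invertible, the term $(T^{-1}\alpha,\alpha)$ is not even defined, and no solvability with estimates follows. The missing idea --- which is precisely the paper's main observation --- is that a hermitian metric on $X$ admits, near every point (embed $U\hookrightarrow\bC^N$), a \emph{bounded} smooth strictly psh local potential $\Phi$ with $\sqrt{-1}\partial\dbar\Phi\gtrsim\omega$. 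Replacing the semipositive metric $h$ on $L|_{\pi^{-1}U}$ by $h'=e^{-C\Phi\circ\pi}h$ for $C\gg0$ adds $C\pi^\ast\sqrt{-1}\partial\dbar\Phi\geq\pi^\ast\omega$ to the curvature, so Demailly--H\"ormander applies; and because $\Phi$ is bounded, $h'\sim h$, so the $L^2$ spaces, the maximal $\dbar$, and hence the local cohomology are unchanged. This single twist closes exactly the ``delicate point'' your last paragraph flags but leaves open, and it is where boundedness of $\Phi$ (rather than boundedness of $|\partial\Phi|$, as in Donnelly--Fefferman) is what matters. Two smaller corrections: the identification of $\sH^0$ needs no simple normal crossing hypothesis --- the computation that the $L^2$ condition on holomorphic $(n,0)$-forms is independent of the possibly degenerate base metric works for any resolution, as in the proof of Theorem \ref{thm_main_local1}; and fineness should be argued as in Lemma \ref{lem_fine_sheaf} (cutoffs restricted from an ambient manifold have $\dbar$ with bounded fiberwise norm), since an appeal to Lipschitz partitions of unity alone does not address the norm of $\dbar f$ with respect to the given metric.
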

A line bundle $L$ on $M$ is called locally semi-positive with respect to $X$ if locally there is an open subset $U\subset X$ such that $L$ admits a smooth hermitian metric on $\pi^{-1}U$ with semi-positive curvature. We will show that this condition can be removed on the level of complex of sheaves and can be weaken on the level of cohomology.

The main observation of this notes is that Demailly's formulation \cite{Demailly1985,Demailly1989} of H\"ormander's estimate, which plays crucial roles in complex geometry in decades, implies MacPherson's conjecture and its further generalizations. The main result is
\begin{thm}[=Theorem \ref{thm_main_local1}]\label{thm_main_local}
	Let $X$ be a complex analytic space of pure dimension $n$ and $ds^2$ be a hermitian metric on a dense Zariski open subset $X^o\subset X_{\rm reg}$ with $\omega$ its fundamental form. Let $\pi:M\to X$ be a proper holomorphic map such that $M$ is smooth and $\pi:\pi^{-1}X^o\to X^o$ is biholomorphic. Let $(L,h)$ be a holomorphic line bundle with a (possibly) singular hermitian metric $h$ on $M$. Denote by $\sI(h)$ the associated multiplier ideal sheaf. Assume that for every point $x\in X$, there is a neighborhood $x\in U$, a K\"ahler metric $g_U$ on $U\cap X^o$, a (possibly) singular hermitian metric $h'\sim h$ on $L|_{\pi^{-1}U}$ and a bounded $C^\infty$ strictly plurisubharmonic function $\Phi$ on $U\cap X^o$ such that $$g_U\sim\omega|_{U\cap X^o}\lesssim\sqrt{-1}\partial\dbar\Phi$$ on $U$ and 
	\begin{align*}
	\sqrt{-1}\Theta_{h'}(L)\geq C\omega
	\end{align*} 
	on $\pi^{-1}(U)$ as currents for some $C\in\bR$.
	Then the complex of sheaves
	\begin{align*}
	0\to\pi_\ast(K_M\otimes L\otimes \sI(h))\to \sD^{n,0}_{X,ds^2}(L,h)\stackrel{\dbar}{\to}\sD^{n,1}_{X,ds^2}(L,h)\stackrel{\dbar}{\to}\cdots\stackrel{\dbar}{\to} \sD^{n,n}_{X,ds^2}(L,h)\to0
	\end{align*}
	is exact. Here $\sD^{n,\bullet}_{X,ds^2}(L,h)$ denotes the complex consisting of measurable $L|_{X^o}$-valued $(n,\bullet)$-forms $\alpha$ such that $\alpha$ and its distributive $\dbar\alpha$ are locally square integrable.
\end{thm}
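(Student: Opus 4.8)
The plan is to prove exactness of this complex of sheaves, which is a local statement on $X$. Exactness at the first spot and the surjectivity of the augmentation $\pi_\ast(K_M\otimes L\otimes\sI(h))\hookrightarrow\ker(\dbar\colon\sD^{n,0}\to\sD^{n,1})$ should be essentially definitional: a holomorphic $(n,0)$-form on $M$ twisted by $L$ with values in $\sI(h)$ is $\dbar$-closed and, under the hypothesis that $\pi$ is biholomorphic over $X^o$, descends to an $L^2_{\mathrm{loc}}$ section on $X^o$; conversely a $\dbar$-closed $L^2_{\mathrm{loc}}$ form on $X^o$ that is holomorphic extends across $X\setminus X^o$ (a set of measure zero, and of the right codimension) to a section in the image of $\pi_\ast$, using an $L^2$-extension/removable-singularity argument across the exceptional and singular loci. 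So I would quickly dispatch exactness in degree $0$ and then concentrate all the effort on exactness in positive degrees $q\geq 1$.

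\smallskip

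The heart of the matter is the following. Fix $x\in X$ and shrink to the neighborhood $U$ furnished by the hypothesis, so that on $U\cap X^o$ we have a K\"ahler metric $g_U$ quasi-isometric to $\omega$, a metric $h'\sim h$ with $\sqrt{-1}\Theta_{h'}(L)\geq C\omega$, and a bounded strictly psh $\Phi$ with $\omega\lesssim\sqrt{-1}\partial\dbar\Phi$. Since quasi-isometric metrics and equivalent weights ($h'\sim h$) give the same $L^2_{\mathrm{loc}}$ conditions, I may compute the stalk cohomology using $g_U$, $h'$ and $\Phi$ in place of $ds^2$, $h$. The strategy is then: given a $\dbar$-closed $\alpha\in\sD^{n,q}_{X,ds^2}(L,h)$ on a small $U$, solve $\dbar\beta=\alpha$ with an $L^2$ bound, on the shrunken but still full-measure open piece $\pi^{-1}(U)\cap\pi^{-1}X^o\cong U\cap X^o$. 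The key point is to perturb the metric $h'$ by the bounded weight $\Phi$: replacing $h'$ by $h'e^{-t\Phi}$ for suitable $t>0$ changes the curvature by $t\sqrt{-1}\partial\dbar\Phi\geq t\,\omega$ (up to the quasi-isometry constant), hence for $t\gg 0$ one gets a strictly positive curvature contribution $\sqrt{-1}\Theta_{h'e^{-t\Phi}}(L)\geq\omega$ in the $(n,q)$-Bochner-Kodaira sense; because $\Phi$ is bounded, $L^2$-norms against $h'e^{-t\Phi}$ and against $h$ are comparable, so solvability for the perturbed problem yields solvability for the original one.

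\smallskip

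With that setup, I would invoke Demailly's formulation of H\"ormander's estimate on the \emph{complete} K\"ahler manifold $U\cap X^o$: one first arranges completeness (shrinking and using that a relatively compact piece of a K\"ahler manifold admits a complete K\"ahler metric dominating $g_U$, e.g. by adding $\varepsilon\sqrt{-1}\partial\dbar(-\log(\dots))$ near the boundary, or by exhausting and passing to a limit), and then applies the estimate to $(n,q)$-forms with the curvature lower bound $\sqrt{-1}\Theta(L)\otimes\omega\geq q\cdot\mathrm{Id}$ type inequality coming from the perturbed positivity. Demailly's theorem then produces, for each $\dbar$-closed $L^2$ form $\alpha$ with $q\geq 1$, a solution $\beta$ to $\dbar\beta=\alpha$ with $\|\beta\|^2\lesssim\|\alpha\|^2$. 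A standard weak-limit/diagonal argument across the exhaustion removes the auxiliary completeness perturbation while keeping the uniform $L^2$ bound, and since the perturbation weights are all bounded the resulting $\beta$ lies in $\sD^{n,q-1}_{X,ds^2}(L,h)$ over $U\cap X^o$, i.e. defines an element in the stalk. This establishes exactness of the stalk complex at every $x$ in positive degree.

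\smallskip

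The main obstacle I expect is analytic rather than formal: controlling the $\dbar$-problem near the singular locus $X\setminus X^o$ and the exceptional set of $\pi$, where the metric $g_U$ (merely quasi-isometric to the possibly very singular ambient $\omega$) is incomplete and $X^o$ is non-compact. The hypotheses $\omega\lesssim\sqrt{-1}\partial\dbar\Phi$ with $\Phi$ bounded, together with $g_U$ K\"ahler, are exactly what is needed to both (i) obtain the extra curvature positivity by the $e^{-t\Phi}$ trick and (ii) guarantee that the relevant $L^2$-spaces behave well under completion and that the negligible sets are genuinely removable; making the completion argument and the passage to the limit rigorous while preserving the curvature inequality as currents across the singularities is the delicate part. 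This is precisely where Demailly's estimate — valid on complete K\"ahler manifolds with a semipositively curved weight — is the right tool, and where the boundedness of $\Phi$ (allowing a harmless perturbation comparable to the original norms) replaces the stronger local semipositivity assumption of Theorem \ref{thm_Ruppenthal}.
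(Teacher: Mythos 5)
Your proposal follows essentially the same route as the paper's proof: use the boundedness of $\Phi$ to twist the metric ($h''=e^{-t\Phi}h'$, harmless for the $L^2$ classes) and gain the curvature bound $\sqrt{-1}\Theta(L)\geq\omega$, then invoke Demailly's formulation of the H\"ormander estimate (Proposition \ref{prop_Hormander_incomplete}) on $U\cap X^o$, which admits a complete K\"ahler metric by the Grauert-type potential of Lemma \ref{lem_complete_metric_exists_locally}, so that $H^{n,q}_{(2)}=0$ for $q\geq 1$; note that the completeness/weak-limit bookkeeping you describe is already packaged inside Proposition \ref{prop_Hormander_incomplete}, so no extra exhaustion argument is needed. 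The only step you call ``essentially definitional'' that the paper actually has to compute is the degree-$0$ identification: the kernel equals $\pi_\ast(K_M\otimes L\otimes\sI(h))$ precisely because the $L^2$-norm of an $(n,0)$-form against the degenerate pullback metric $\pi^\ast ds^2$ is comparable to its norm against any smooth metric on $M$ (top-degree form norms are metric-independent), which combined with $L^2$-Riemann extension across ${\rm Exc}(\pi)$ is what makes the kernel match the multiplier-ideal condition.
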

\begin{thm}\label{thm_main_global}
	Notations as in Theorem \ref{thm_main_local}. Assume that 
	\begin{enumerate}
		\item $X$ is compact and locally near each point $x\in X$ there is a neighborhood $U$ and a hermitian metric $ds^2_0$ on $U$ such that $ds^2_0\lesssim ds^2|_{U}$,
		\item $\sqrt{-1}\Theta_h(L)$ is $\pi$-relatively semipositive. 
	\end{enumerate}
	We have an isomorphism
	$$H^{n,q}(M,L\otimes\sI(h))\simeq H^{n,q}_{(2),\rm max}(X^o,L|_{X^o})\quad \forall q\geq0.$$
\end{thm}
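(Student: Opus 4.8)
The strategy is to read Theorem~\ref{thm_main_local} as a statement about sheaf cohomology on $X$ and then transport that cohomology to $M$ by a relative vanishing theorem. Under the present hypotheses Theorem~\ref{thm_main_local} applies: assumption~(1) supplies, near each $x\in X$, a genuine hermitian metric $ds^2_0\lesssim ds^2$ from which the comparison $g_U\sim\omega|_{U\cap X^o}$ with a K\"ahler metric and the bounded strictly plurisubharmonic $\Phi$ required there can be produced, while the $\pi$-relative semipositivity of assumption~(2) provides the curvature lower bound $\sqrt{-1}\Theta_{h'}(L)\geq C\omega$. Hence the complex
\[
0\to\pi_\ast(K_M\otimes L\otimes\sI(h))\to\sD^{n,0}_{X,ds^2}(L,h)\stackrel{\dbar}{\to}\cdots\stackrel{\dbar}{\to}\sD^{n,n}_{X,ds^2}(L,h)\to0
\]
is an exact sequence of sheaves on $X$, i.e. a resolution of $\pi_\ast(K_M\otimes L\otimes\sI(h))$.

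First I would observe that each $\sD^{n,q}_{X,ds^2}(L,h)$ is a fine sheaf: it is a module over the (soft) sheaf of smooth functions on $X$, and multiplication by a smooth partition of unity $\rho$ preserves local square-integrability of both $\alpha$ and $\dbar(\rho\alpha)=\dbar\rho\wedge\alpha+\rho\,\dbar\alpha$. Being soft, these sheaves are acyclic for $\Gamma(X,-)$, so the resolution computes
\[
H^q\big(X,\pi_\ast(K_M\otimes L\otimes\sI(h))\big)\simeq H^q\big(\Gamma(X,\sD^{n,\bullet}_{X,ds^2}(L,h)),\dbar\big).
\]
Next I would identify the right-hand complex with the one defining maximal $L^2$-cohomology. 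Since $X$ is compact, a finite cover by neighborhoods carrying the defining local $L^2$ bounds shows that a global section of $\sD^{n,q}_{X,ds^2}(L,h)$ is exactly a measurable $L$-valued $(n,q)$-form $\alpha$ on $X^o$ with $\alpha$ and its distributional $\dbar\alpha$ globally square-integrable for $ds^2$ and $h$; assumption~(1) is what ensures that the local integrability conditions near the singular and boundary locus assemble into a finite global integral. This is precisely the domain of the maximal (distributional) extension of $\dbar$, so the complex of global sections is the $L^2$ Dolbeault complex and $H^q(\Gamma(X,\sD^{n,\bullet}_{X,ds^2}(L,h)),\dbar)\simeq H^{n,q}_{(2),\mathrm{max}}(X^o,L|_{X^o})$.

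It remains to pass from $X$ to $M$. Here assumption~(2) enters a second time: because $\pi$ is a proper modification (biholomorphic over $X^o$, hence generically finite) and $\sqrt{-1}\Theta_h(L)$ is $\pi$-relatively semipositive, the relative Grauert--Riemenschneider--Takegoshi vanishing theorem in its multiplier-ideal form yields
\[
R^j\pi_\ast\big(K_M\otimes L\otimes\sI(h)\big)=0,\qquad j>0.
\]
The Leray spectral sequence for $\pi$ then degenerates and gives $H^q(M,K_M\otimes L\otimes\sI(h))\simeq H^q(X,\pi_\ast(K_M\otimes L\otimes\sI(h)))$, where the left-hand side is the object denoted $H^{n,q}(M,L\otimes\sI(h))$. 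Composing the three displayed isomorphisms proves the theorem.

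I expect the main obstacle to be the relative vanishing step rather than the formal bookkeeping. One needs a Grauert--Riemenschneider/Takegoshi statement valid for a proper (K\"ahler) modification twisted by a line bundle carrying a possibly singular metric $h$, with the multiplier ideal $\sI(h)$ genuinely present, and one must confirm that $\pi$-relative semipositivity is precisely the curvature hypothesis such a theorem requires on a compact but possibly non-projective ambient space. A secondary delicate point, feeding the identification of global sections, is the verification near the degeneration locus of $ds^2$ that ``locally $L^2$'' really glues to ``globally $L^2$'' and that the resulting differential is the maximal closed extension of $\dbar$; this is where the lower bound $ds^2_0\lesssim ds^2$ of assumption~(1) does real work by keeping the sheaves of locally square-integrable forms well behaved along the singular set.
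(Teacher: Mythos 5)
Your overall architecture coincides with the paper's: (i) exactness of the sheaf complex from Theorem \ref{thm_main_local1} plus fineness of the sheaves $\sD^{n,q}_{X,ds^2}(L,h)$ (Lemma \ref{lem_fine_sheaf}, which is exactly where $ds^2_0\lesssim ds^2$ is used to bound $|\dbar\rho|_{ds^2}$ for a cutoff $\rho$) identifies $H^q\bigl(X,\pi_\ast(K_M\otimes L\otimes\sI(h))\bigr)$ with $H^{n,q}_{(2),\mathrm{max}}(X^o,L|_{X^o})$; (ii) vanishing of $R^q\pi_\ast(K_M\otimes L\otimes\sI(h))$ for $q>0$ plus Leray transports this to $M$. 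However step (ii), which you defer to ``the relative Grauert--Riemenschneider--Takegoshi vanishing theorem in its multiplier-ideal form'' and yourself flag as the main obstacle, is a genuine gap: no off-the-shelf statement of that theorem takes $\pi$-relative semipositivity as its curvature hypothesis. The available results (e.g. the Matsumura vanishing the paper cites) require $\sqrt{-1}\Theta_h(L)\geq 0$ as a current, whereas $\pi$-relative semipositivity only gives $\sqrt{-1}\Theta_h(L)\geq-\pi^\ast\beta$ for some positive $(1,1)$-form $\beta$ on $X$, which permits genuine negativity in directions coming from the base.

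The missing idea --- and the actual non-formal content of the paper's proof, the same device that drives Theorem \ref{thm_main_local1} --- is the bounded-potential twist. Since $R^q\pi_\ast$ is local on $X$, one may shrink to a neighborhood $U\subset X$ carrying a bounded smooth strictly psh function $\Phi$ (restrict one from a local embedding $U\subset\bC^N$). Set $h'=e^{-C\pi^\ast\Phi}h$; for $C\gg 0$ the term $C\,\pi^\ast\sqrt{-1}\partial\dbar\Phi$ absorbs $-\pi^\ast\beta$, so $\sqrt{-1}\Theta_{h'}(L)\geq 0$, while boundedness of $\Phi$ gives $h'\sim h$ and hence the literal equality of sheaves $\sI(h')=\sI(h)$. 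Matsumura's vanishing then applies to $(L,h')$ and yields $R^q\pi_\ast(K_M\otimes L\otimes\sI(h))=0$ for $q>0$, after which Leray finishes as you say. A smaller misattribution in your first paragraph: assumption (1) does not ``produce'' the hypotheses of Theorem \ref{thm_main_local} --- a lower bound $ds^2_0\lesssim ds^2$ yields neither the K\"ahler comparison $g_U\sim\omega|_{U\cap X^o}$ nor the upper bound $\omega\lesssim\sqrt{-1}\partial\dbar\Phi$; those hypotheses are simply in force because the theorem assumes the setting of Theorem \ref{thm_main_local}, and assumption (1)'s real job is only the fineness of the sheaves, just as you correctly note in your closing paragraph.
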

A $(1,1)$-current $\alpha$ on $M$ is called $\pi$-relatively semipositive if for some positive $(1,1)$-form $\beta$ on $X$, $\beta+\pi^\ast\alpha$ is semipositive. A typical example of $(L,h)$ with $\pi$-relatively semipositive is $(\pi^\ast F,\pi^\ast h)$ where $(F,h)$ is a holomorphic line bundle with a singular metric on $X$.

There are several natural metrics that satisfy the conditions in Theorem \ref{thm_main_local}:

{\bf Hermitian metric}
One typical example of the metrics that satisfy the conditions in  Theorem \ref{thm_main_local} is the hermitian metric on $X$ (metrics that are hermitian at singular points, see Definition \ref{defn_hermitian_metric}). Among many other metrics considered in Theorem \ref{thm_main_local}, metrics that are hermitian on the singularities share better properties. In this case we have
\begin{cor}[=Corollary \ref{cor_main_hermitian_metric1}]\label{cor_main_hermitian_metric}
	Let $X$ be a complex analytic space of pure dimension $n$ and $ds^2$ be a hermitian metric on $X$. Let $\pi:M\to X$ be a resolution of singularities. Let $(L,h)$ be a holomorphic line bundle with a possibly singular hermitian metric $h$ on $M$. Denote by $\sI(h)$ the associated multiplier ideal sheaf. 
	Then the complex of sheaves
	\begin{align*}
	0\to\pi_\ast(K_M\otimes L\otimes \sI(h))\to \sD^{n,0}_{X,ds^2}(L,h)\stackrel{\dbar}{\to}\sD^{n,1}_{X,ds^2}(L,h)\stackrel{\dbar}{\to}\cdots\stackrel{\dbar}{\to} \sD^{n,n}_{X,ds^2}(L,h)\to0
	\end{align*}
	is exact.
	Moreover if $X$ is compact and $\sqrt{-1}\Theta_h(L)$ is $\pi$-relatively semipositive, we have an isomorphism
	$$H^{n,q}(M,L\otimes\sI(h))\simeq H^{n,q}_{(2),\rm max}(X_{\rm reg},L|_{X_{\rm reg}})\quad \forall q\geq0.$$
\end{cor}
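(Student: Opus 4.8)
The plan is to derive Corollary~\ref{cor_main_hermitian_metric} as a special case of Theorems~\ref{thm_main_local} and~\ref{thm_main_global}, the main technical work being the verification that a genuine hermitian metric $ds^2$ on all of $X$ satisfies the hypotheses of those theorems. I would take $X^o=X_{\rm reg}$, so that $\pi:\pi^{-1}X_{\rm reg}\to X_{\rm reg}$ is biholomorphic by the definition of a resolution, and the fundamental form $\omega$ is that of $ds^2|_{X_{\rm reg}}$. The two statements to be proved are the exactness of the complex of sheaves (from Theorem~\ref{thm_main_local}) and, under the additional compactness and relative-semipositivity assumptions, the cohomology isomorphism (from Theorem~\ref{thm_main_global}).

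For the sheaf-level exactness, the key step is the local construction, near each point $x\in X$, of a K\"ahler metric $g_U$ on $U\cap X_{\rm reg}$, a rescaled metric $h'\sim h$, and a bounded strictly plurisubharmonic function $\Phi$ with
\begin{align*}
g_U\sim\omega|_{U\cap X_{\rm reg}}\lesssim\sqrt{-1}\partial\dbar\Phi.
\end{align*}
The natural choice is to embed $U$ into a ball in some $\bC^N$ via local generators (using that $X$ is hermitian, so the metric extends across the singular locus in the ambient sense), take $\Phi=|z|^2$ restricted to $U\cap X_{\rm reg}$, which is bounded, smooth, and strictly plurisubharmonic, and set $g_U$ to be the restriction of the flat metric. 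Because $ds^2$ is hermitian at the singular points it is, locally, comparable to the ambient Euclidean metric, which gives $g_U\sim\omega|_{U\cap X_{\rm reg}}$; and $\sqrt{-1}\partial\dbar\Phi$ restricted to the smooth part dominates the induced metric, giving the required $\lesssim$. The curvature condition $\sqrt{-1}\Theta_{h'}(L)\ge C\omega$ is arranged by rescaling $h$ by a bounded plurisubharmonic weight on $\pi^{-1}(U)$ after shrinking $U$, exactly as permitted by the relation $h'\sim h$, which leaves the multiplier ideal $\sI(h)$ unchanged. Once these local data are produced, Theorem~\ref{thm_main_local} applies verbatim and yields exactness.

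For the cohomology isomorphism I would simply check the two extra hypotheses of Theorem~\ref{thm_main_global}. Condition~(1) there asks for a local hermitian metric $ds^2_0$ with $ds^2_0\lesssim ds^2|_U$; since $ds^2$ is itself hermitian on $U$ we may take $ds^2_0=ds^2|_U$, so the comparison is trivial. Condition~(2), the $\pi$-relative semipositivity of $\sqrt{-1}\Theta_h(L)$, is assumed in the statement of the corollary. Hence the isomorphism $H^{n,q}(M,L\otimes\sI(h))\simeq H^{n,q}_{(2),\rm max}(X_{\rm reg},L|_{X_{\rm reg}})$ follows directly from Theorem~\ref{thm_main_global}, with $X^o=X_{\rm reg}$.

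The main obstacle I anticipate is purely in the local geometric comparison: making precise, near a singular point $x$, that a metric which is \emph{hermitian at the singularity} (in the sense of Definition~\ref{defn_hermitian_metric}, presumably comparable to the restriction of an ambient hermitian metric under a local embedding) is simultaneously bounded below by a K\"ahler metric $g_U$ and bounded above by $\sqrt{-1}\partial\dbar\Phi$ for a \emph{bounded} potential $\Phi$. The boundedness of $\Phi$ is what forces the use of a bounded local potential such as $|z|^2$ rather than, say, $\log$-type weights, and checking that the induced metric on the singular variety is dominated by $\sqrt{-1}\partial\dbar|z|^2$ up to a constant is the one place where the hermitian (as opposed to merely smooth-on-$X_{\rm reg}$) hypothesis is genuinely used. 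Everything else is a direct citation of the two main theorems.
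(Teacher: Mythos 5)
Your overall strategy is the same as the paper's: specialize Theorems \ref{thm_main_local} and \ref{thm_main_global} with $X^o=X_{\rm reg}$, produce a bounded strictly psh potential from a local embedding (the paper uses the bounded local potential of an ambient K\"ahler metric, you use $|z|^2$; same thing), take $g_U$ the restricted ambient metric, and quote conditions (1) and (2) of Theorem \ref{thm_main_global}. The metric-comparison half of your verification, $g_U\sim\omega\lesssim\sqrt{-1}\partial\dbar\Phi$, is fine. The gap is concentrated in the one sentence where you claim that the curvature condition $\sqrt{-1}\Theta_{h'}(L)\geq C\omega$ on $\pi^{-1}(U)$ ``is arranged by rescaling $h$ by a bounded plurisubharmonic weight on $\pi^{-1}(U)$ after shrinking $U$.'' This is precisely the nontrivial point of the corollary, and as stated the rescaling cannot do the job. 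The pair $(L,h)$ lives on the resolution $M$, not on $X$: locally one only knows $\sqrt{-1}\Theta_h(L)\geq\alpha$ for some smooth $(1,1)$-form $\alpha$ on $\pi^{-1}U$, and $\alpha$ may be strictly negative in directions tangent to the exceptional fibers of $\pi$. A weight pulled back from $X$ (such as $\pi^\ast\Phi$) has complex Hessian degenerating in exactly those directions, so it cannot compensate that negativity pointwise in any obvious way; and a bounded weight $\chi$ on $\pi^{-1}(U)$ whose Hessian is strictly positive along the compact, positive-dimensional fibers of $\pi$ does not exist at all, by the maximum principle applied to $\chi$ restricted to a fiber. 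So ``bounded psh rescaling'' by itself does not yield the hypothesis of Theorem \ref{thm_main_local}; an extra geometric input is required.

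That extra input is the paper's Lemma \ref{lem_lemma_hermitian_metric}, which is exactly where the hermitian-at-the-singularities hypothesis enters a second time (the paper explicitly advertises it as ``the property that hermitian metrics are distinguished from many others''): for $\omega$ coming from a metric that is hermitian on $X$, hence positive even at $X_{\rm sing}$, and for any smooth $(1,1)$-form $\alpha$ on $M$, locally there is $C>0$ such that $C\pi^\ast\sqrt{-1}\partial\dbar\Phi+\alpha>0$ over $\pi^{-1}X_{\rm reg}$. Granting this, the paper sets $h'=e^{-(1+C)\pi^\ast\Phi}h$, which is a bounded rescaling (so $h'\sim h$ and $\sI(h')=\sI(h)$), and obtains $\sqrt{-1}\Theta_{h'}(L)\geq(1+C)\pi^\ast\sqrt{-1}\partial\dbar\Phi+\alpha>\pi^\ast\sqrt{-1}\partial\dbar\Phi\gtrsim\pi^\ast\omega$ as currents, which is the curvature hypothesis of Theorem \ref{thm_main_local}. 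In other words, the rescaling works only after one proves that the negative part of $\sqrt{-1}\Theta_h(L)$ is already dominated by a multiple of the degenerate form $\pi^\ast\sqrt{-1}\partial\dbar\Phi$ away from the exceptional set; this comparison lemma (or an equivalent argument) is the missing idea in your proposal. Relatedly, your diagnosis of ``the main obstacle'' is misplaced: the bounded-potential comparison you single out is the easier half, while the genuinely delicate half is dominating the curvature of an arbitrary $(L,h)$ on $M$ by forms pulled back from $X$ — which is unavoidable here since, unlike in Theorem \ref{thm_Ruppenthal}, $L$ is not assumed to be semipositive or to come from the base.
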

When $h$ is  smooth, Corollary \ref{cor_main_hermitian_metric} is reduced to Theorem \ref{thm_Ruppenthal}, thus implies MacPherson's conjecture. 

Let $(F,h_F)$ be a holomorphic line bundle on $X$ with a singular hermitian metric, then $(L,h):=(\pi^\ast F,\pi^\ast h_F)$ has $\pi$-relatively semipositive curvature current.
By the Grothendieck duality and the $L^2$-Serre duality (\ref{prop_Serre_dual}), we also obtain an isomorphism
\begin{align}
H^q_{(2),\min}(X_{\rm reg},F^\vee|_{X_{\rm reg}})\simeq {\rm Ext}^q_M(\sI(\pi^\ast h),\pi^\ast F^\vee)\quad \forall q\geq0.
\end{align}
This implies the generalization of (\ref{align_Mac_conj}):
\begin{align}
\sum_{q=0}^n(-1)^q\dim H^{0,q}_{(2),{\rm min}}(X_{\rm reg},F^\vee|_{X_{\rm reg}})=\sum_{q=0}^n(-1)^q\dim {\rm Ext}^q_M(\sI(\pi^\ast h),\pi^\ast F^\vee).
\end{align}
Pardon-Stern and Ruppenthal use Donnelly-Fefferman-type estimate in their arguments. This estimate requires the condition that locally  the metric should be quasi-isometric to $\sqrt{-1}\partial\dbar\Phi$ where $|\partial\Phi|$ is bounded. Our technique is to use Demailly-H\"ormander's estimate (Proposition \ref{prop_Hormander_incomplete}) together with the requirement that $\Phi$ is bounded. This allows us to add positivity on $L$ without changing the quasi-isometric class of $h$. 
As is stated in Theorem \ref{thm_main_local}, we do not require $|\partial\Phi|$ to be bounded. 

MacPherson's conjecture and Pardon-Stern's solution establish an analytic understanding of the Grauert-Riemenschneidner sheaf for arbitrary complex analytic singularities. Further, it is interesting to consider  the pluri-canonical bundle which plays crucial rules in birational geometry.
As stated above, we do not require the twisted bundle $L$ to be locally semi-positive over $X$. This enables us to study some line bundles $L$ that do not come from the base $X$,  $K_X^{\otimes m}$ for example. 

Recall that $\pi:M\to X$ is a good resolution if ${\rm Exc}(\pi):=\pi^{-1}(X_{\rm sing})$ is a simple normal crossing divisor.
\begin{cor}\label{cor_main_plurican}
	Let $(X,ds^2)$ be a compact hermitian complex analytic space of pure dimension $n$ and $\pi:M\to X$ be a good resolution of singularities. Let $L$ be a line bundle on $M$ with a smooth hermitian metric. Then for each $m\geq 1$ there are isomorphisms
	$$H^{q}_{(2),\rm max}\left(X_{\rm reg}, K_{X_{\rm reg}}^{\otimes m}\otimes L|_{X_{\rm reg}}\right)\simeq H^{q}\left(X,\pi_\ast (K_M^{\otimes m}\otimes\sO_M(\sum_{i\in I}(1-m)a_i D_i)\otimes L)\right),\quad \forall q\geq0.$$
	Here ${\rm Exc}(\pi)=\cup_{i\in I}D_i$ is the irreducible decomposition and $a_i\in \bZ_{\geq 0}$ only depends on $\pi$ and the local quasi-isometric class of $ds^2$.
	
	Assume moreover that $X$ admits only Gorestein singularities so that its dualizing sheaf $\omega_X$ satisfies
	$$\pi^\ast \omega_X\simeq K_M\otimes \sO_M(\sum_{i\in I}b_i D_i)$$
	for some $b_i\in \bZ_{\geq 0}$. Let $F$ be a holomorphic line bundle admits a smooth hermitian metric. Then for every $q\geq0$ there is an isomorphism
	$$H^{q}_{(2),\rm max}\left(X_{\rm reg}, K_{X_{\rm reg}}^{\otimes m}\otimes F|_{X_{\rm reg}}\right)\simeq H^{q}\left(X,\omega_X^{\otimes m}\otimes\pi_\ast\sO_M(\sum_{i\in I}(1-m)(a_i+b_i) D_i)\otimes F\right).$$
\end{cor}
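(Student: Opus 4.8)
The plan is to deduce Corollary~\ref{cor_main_plurican} directly from Corollary~\ref{cor_main_hermitian_metric} by choosing the right singular metric $h$ on a suitable twisted bundle over $M$ and then identifying the pushforward $\pi_\ast(K_M\otimes L'\otimes\sI(h))$ in terms of the data appearing in the statement. First I would reduce the $L^2$-cohomology group on the left-hand side to the format of Corollary~\ref{cor_main_hermitian_metric}. The coefficient bundle there is $K_{X_{\rm reg}}^{\otimes m}\otimes L|_{X_{\rm reg}}$, whereas Corollary~\ref{cor_main_hermitian_metric} is phrased for $H^{n,q}_{(2),\max}(X_{\rm reg},E|_{X_{\rm reg}})$, i.e. for $(n,q)$-forms valued in a line bundle $E$. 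The standard identification $H^q_{(2),\max}(X_{\rm reg},W)\simeq H^{n,q}_{(2),\max}(X_{\rm reg},K_{X_{\rm reg}}^{\vee}\otimes W)$ lets me set $E=K_{X_{\rm reg}}^{\otimes(m-1)}\otimes L|_{X_{\rm reg}}$, so that $K_{X_{\rm reg}}\otimes E=K_{X_{\rm reg}}^{\otimes m}\otimes L|_{X_{\rm reg}}$, matching the desired coefficients. On $M$ the corresponding line bundle is $K_M^{\otimes(m-1)}\otimes L$.

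The heart of the argument is the choice of singular metric. I would equip $K_M^{\otimes(m-1)}\otimes L$ with the singular hermitian metric $h$ induced by the pullback metric $\pi^\ast(ds^2)$ on $K_M^{\otimes(m-1)}$ (degenerate along $\mathrm{Exc}(\pi)$ because the resolution contracts divisors) tensored with the given smooth metric on $L$. Since $\pi^\ast(ds^2)$ differs from a smooth metric on $K_M$ exactly by the order of vanishing of the Jacobian of $\pi$ along the exceptional divisors $D_i$, the multiplier ideal of $h$ on $K_M^{\otimes(m-1)}$ records these orders: writing the discrepancy/quasi-isometry data as $a_i\in\bZ_{\geq0}$, one gets $K_M\otimes K_M^{\otimes(m-1)}\otimes\sI(h)\simeq K_M^{\otimes m}\otimes\sO_M\!\big(\sum_{i\in I}(1-m)a_iD_i\big)\otimes L$. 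This is precisely where the exponents $(1-m)a_i$ come from, and I would make the local computation of $\sI(h)$ in the coordinates of a good resolution (here the assumption that $\pi$ is a \emph{good} resolution, so that $\mathrm{Exc}(\pi)$ is simple normal crossing, is essential: it makes the multiplier ideal monomial and the $a_i$ well defined and dependent only on $\pi$ and the local quasi-isometry class of $ds^2$).

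Next I would verify the hypotheses of Corollary~\ref{cor_main_hermitian_metric} for this $h$: that $ds^2$ is a genuine hermitian metric on $X$ (given), and that $\sqrt{-1}\Theta_h(K_M^{\otimes(m-1)}\otimes L)$ is $\pi$-relatively semipositive. The curvature splits as $(m-1)$ times the curvature of $\pi^\ast(ds^2)$ on $K_M$ plus the smooth (hence locally bounded) curvature of $L$; the first term is a pullback-type current and is $\pi$-relatively semipositive after correcting by a positive form on $X$, while a smooth summand is absorbed into the $\beta$ in the definition of $\pi$-relative semipositivity. Applying Corollary~\ref{cor_main_hermitian_metric} then yields $H^{n,q}_{(2),\max}(X_{\rm reg},E|_{X_{\rm reg}})\simeq H^q\big(X,\pi_\ast(K_M\otimes K_M^{\otimes(m-1)}\otimes L\otimes\sI(h))\big)$, and substituting the multiplier-ideal identification gives the first isomorphism. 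For the Gorenstein case I would use the stated relation $\pi^\ast\omega_X\simeq K_M\otimes\sO_M(\sum_i b_iD_i)$ to rewrite $K_M^{\otimes m}$ as $\pi^\ast\omega_X^{\otimes m}\otimes\sO_M(\sum_i(-m)b_iD_i)$, combine the exponents, and push the $\omega_X^{\otimes m}$ factor out of $\pi_\ast$ by the projection formula.

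The main obstacle I anticipate is the precise local computation of $\sI(h)$ and the bookkeeping of the exponents. Showing that the multiplier ideal of the pullback metric $\pi^\ast(ds^2)$ raised to the $(m-1)$ power is exactly $\sO_M(\sum_i(1-m)a_iD_i)$ requires controlling how the pulled-back volume form degenerates along each $D_i$ and checking that the integrability threshold matches the integer $(1-m)a_i$ with no rounding loss; this is where the quasi-isometry-invariance of $a_i$ must be argued carefully, and where one must confirm that only the monomial part of the metric contributes (so that a bounded smooth perturbation of $ds^2$ leaves the $a_i$ unchanged). A secondary technical point is to confirm that the $(n,q)\leftrightarrow q$ coefficient twist by $K_{X_{\rm reg}}$ is compatible with the maximal $L^2$ realization so that the isomorphism of Corollary~\ref{cor_main_hermitian_metric} transports cleanly; this should follow formally once the bundles are matched, but it deserves an explicit check.
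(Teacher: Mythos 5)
Your proposal follows the same route as the paper's own proof: you pass to $(n,q)$-forms valued in $K_M^{\otimes(m-1)}\otimes L$, endow this bundle with the $(m-1)$-st power of the singular metric on $K_M$ induced by $\pi^\ast ds^2$ tensored with the smooth metric on $L$, identify the multiplier ideal with $\sO_M(\sum_{i\in I}(1-m)a_iD_i)$ through the vanishing orders of the Jacobian of $\pi$ along the $D_i$ (this is exactly the paper's computation ${\rm vol}_{\pi^\ast ds^2}\sim|\Lambda|^2\,{\rm vol}_M$, with the simple normal crossing hypothesis used just as you indicate), and treat the Gorenstein case by the projection formula. So the architecture and both key computations coincide with the paper's.

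The one step that would fail as you argue it is the verification of $\pi$-relative semipositivity. A smooth summand of the curvature cannot be ``absorbed into $\beta$'': the definition requires $\pi^\ast\beta+\alpha\geq 0$ for some positive $(1,1)$-form $\beta$ on $X$, and $\pi^\ast\beta$ vanishes on directions tangent to the fibers of $\pi$, so no choice of $\beta$ can compensate negativity of the Ricci term or of $\sqrt{-1}\Theta_{h_L}(L)$ along the exceptional fibers; for a general smooth $h_L$ the condition simply fails. Fortunately it is also not needed: the right-hand side of the first isomorphism is $H^{q}\left(X,\pi_\ast(\cdots)\right)$, cohomology on $X$ of the pushforward, not $H^{n,q}(M,\cdots)$, and that isomorphism follows from the sheaf-level exactness part of Corollary \ref{cor_main_hermitian_metric1} --- which needs no global positivity, only that the curvature is locally bounded below by a smooth form; this is precisely why the paper checks that ${\rm Ric}(\pi^{\ast}ds^2)$ extends smoothly to $M$ --- combined with fineness (Lemma \ref{lem_fine_sheaf}) and compactness of $X$. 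The $\pi$-relative semipositivity hypothesis enters only in the ``moreover'' part of Corollary \ref{cor_main_hermitian_metric1}, i.e.\ in passing further to cohomology on $M$, which the present statement never claims; you should delete that verification rather than try to repair it. A final bookkeeping warning: in the Gorenstein case, substituting $K_M\simeq\pi^\ast\omega_X\otimes\sO_M(-\sum_{i\in I}b_iD_i)$ and combining exponents literally yields $(1-m)a_i-mb_i$, which differs from the stated exponent $(1-m)(a_i+b_i)$ by $b_i$; since the pushforwards $\pi_\ast\sO_M$ of these two divisors are not obviously equal, your phrase ``combine the exponents'' conceals a discrepancy that must either be reconciled or flagged (the paper's own proof is silent on this point).
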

This is a generalization of Theorem \ref{thm_Ruppenthal} to pluri-canonical bundles. Distinguished from Theorem \ref{thm_Ruppenthal}, we see that the $L^2$-pluri-canonical cohomology contains interesting information of the singularities of $X$.

{\bf Conical metric} Another interesting metric that works for Theorem \ref{thm_main_local} is the conical metric.
Let $(\overline{X},D)$ be a log smooth log canonical pair, i.e.
$\overline{X}$ is a compact complex manifold of dimension $n$ and $D=\sum_{i=1}^r b_iD_i\subset \overline{X}$ is an $\bR$-divisor with normal crossing support such that $0<b_i< 1$. $ds_X^2$ is a metric on $X=\overline{X}\backslash D$ which has conical singularities along $D$, i.e. around each point $p\in D$ there is a chart $(z_1,\cdots,z_n)$ such that $D=b_1\{z_1=0\}+\cdots+b_r\{z_r=0\}$ and 
\begin{align}\label{align_cone_metric}
ds_X^2\sim\sum_{i=1}^r|z_i|^{-2b_i}dz_id\overline{z_i}+\sum_{i=r+1}^ndz_id\overline{z_i},\quad b_1,\dots,b_r\in(0,1).
\end{align}
Such metric has attracted many attentions in recent years. They appear as canonical metrics on quasi-projective smooth algebraic varieties with simple boundary asymptotic behavior \cite{Guenancia_Paun2016} and are used to construct K\"ahler Einstein metrics on K-stable Fano manifolds \cite{CDS2015_1,CDS2015_2,CDS2015_3,Tiangang2015}.
This kind of metric has the bounded local potential $\Phi(z)=\sum_{i=1}^r|z_i|^{2-2b_i}$. Thus Theorem \ref{thm_main_local} applies by taking $\pi={\rm Id}$.
\begin{cor}
	Let $(\overline{X},D=\sum_{i=1}^r b_iD_i)$ be a log smooth log canonical pair and $ds^2$ be a hermitian metric with conical singularities associated to $D$. Let $(L,h)$ be a holomorphic line bundle with a smooth hermitian metric on $\overline{X}$. Then there are isomorphisms
	$$H^{n,q}(\overline{X},L)\simeq H^{n,q}_{(2),\rm max}(X,L),\quad \forall q\geq0.$$
\end{cor}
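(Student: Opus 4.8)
The plan is to deduce this corollary as a direct application of Theorem~\ref{thm_main_local} together with Theorem~\ref{thm_main_global}, taking the resolution to be the identity map. Concretely, I would let the singular space of those theorems be the compact manifold $\overline{X}$ itself, set $\pi=\mathrm{Id}_{\overline X}:\overline X\to\overline X$ (so that $M=\overline X$), and take the dense Zariski open subset ``$X^o$'' to be $\overline X\setminus D=X$, equipped with the conical metric $ds^2$ of fundamental form $\omega$. Since $h$ is a \emph{smooth} hermitian metric on the compact manifold $\overline X$, its multiplier ideal sheaf is trivial, $\sI(h)=\sO_{\overline X}$, whence $K_M\otimes L\otimes\sI(h)=K_{\overline X}\otimes L$. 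Thus the isomorphism $H^{n,q}(M,L\otimes\sI(h))\simeq H^{n,q}_{(2),\mathrm{max}}(X^o,L|_{X^o})$ furnished by Theorem~\ref{thm_main_global} becomes precisely $H^{n,q}(\overline X,L)\simeq H^{n,q}_{(2),\mathrm{max}}(X,L)$, which is the desired conclusion. It remains only to verify the hypotheses.

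The heart of the argument is the local condition of Theorem~\ref{thm_main_local} at a point $p\in D$; away from $D$ the metric is smooth and one may take $\Phi=|z|^2$ on a small ball. Near $p$ I would use the bounded potential $\Phi(z)=\sum_{i=1}^r|z_i|^{2-2b_i}+\sum_{i=r+1}^n|z_i|^2$ indicated in the text. The key computation is the single-variable complex Hessian: for $0<b<1$ one finds
\begin{align*}
\sqrt{-1}\partial\dbar|z|^{2-2b}=(1-b)^2\,|z|^{-2b}\,\sqrt{-1}\,dz\wedge d\overline z,
\end{align*}
so that $\sqrt{-1}\partial\dbar\Phi=\sum_{i=1}^r(1-b_i)^2|z_i|^{-2b_i}\sqrt{-1}\,dz_i\wedge d\overline{z_i}+\sum_{i=r+1}^n\sqrt{-1}\,dz_i\wedge d\overline{z_i}$ is exactly, up to the positive constants $(1-b_i)^2$, the diagonal model metric on the right of (\ref{align_cone_metric}). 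In particular $\Phi$ is strictly plurisubharmonic on $U\cap X^o$, so $g_U:=\sqrt{-1}\partial\dbar\Phi$ is a closed positive $(1,1)$-form, i.e.\ a Kähler metric, there; the quasi-isometry (\ref{align_cone_metric}) then gives $g_U\sim\omega|_{U\cap X^o}\lesssim\sqrt{-1}\partial\dbar\Phi$, the cross terms of $ds^2$ being absorbed into $\sim$. Boundedness of $\Phi$ on a relatively compact chart follows from $2-2b_i>0$.

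For the curvature condition I would simply take $h'=h$. Because every exponent $-2b_i$ is negative, the model metric, and hence $\omega$, dominates a fixed smooth hermitian metric $\omega_0$ on the chart, $\omega\gtrsim\omega_0$; this also furnishes hypothesis (1) of Theorem~\ref{thm_main_global}. Since $L$ carries a smooth metric on the compact $\overline X$, the curvature $\sqrt{-1}\Theta_h(L)$ is a bounded smooth $(1,1)$-form, so $\sqrt{-1}\Theta_h(L)\geq -A\omega_0\geq C\omega$ for a sufficiently negative constant $C$, which is the required lower bound. Note that it is precisely the blow-up of $\omega$ along $D$, rather than any positivity of $L$, that makes this bound free; this is exactly where allowing an arbitrary real $C$ in Theorem~\ref{thm_main_local} matters. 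Finally, $\pi$-relative semipositivity (hypothesis (2) of Theorem~\ref{thm_main_global}) is immediate with $\pi=\mathrm{Id}$: as $\sqrt{-1}\Theta_h(L)$ is bounded below, adding a large positive $(1,1)$-form $\beta$ on $\overline X$ makes $\beta+\sqrt{-1}\Theta_h(L)\geq0$. With all hypotheses checked, Theorem~\ref{thm_main_global} yields the isomorphism. I expect the only genuinely delicate point to be the bookkeeping of the quasi-isometry constants in the Hessian computation; everything else is formal once $\pi$ is taken to be the identity.
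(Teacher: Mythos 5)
Your proposal is correct and follows essentially the same route as the paper: apply Theorem~\ref{thm_main_local} (and Theorem~\ref{thm_main_global}) with $\pi=\mathrm{Id}$, using the bounded local potential of the conical metric and the fact that the conical metric dominates a smooth hermitian metric. The only cosmetic differences are that you keep $h'=h$ and allow a negative constant $C$ in the curvature bound, where the paper invokes a local flat metric on $L$, and you carefully write out the Hessian computation (including the needed $\sum_{i>r}|z_i|^2$ term in $\Phi$) that the paper leaves implicit.
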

\section{Preliminary on $L^2$-cohomology and H\"ormander's Estimate}
Let us start with the definition of singular hermitian metric.
\begin{defn}
	A function $\varphi:\Omega\to[-\infty,\infty)$ on a complex manifold $\Omega$ is called plurisubharmonic (psh) if $\varphi$ is upper semicontinuous and $\sqrt{-1}\partial\dbar\varphi\geq 0$ as a current.
\end{defn}
\begin{defn}\label{defn_singular_metric}
	A singular hermitian metric $h$ on a holomorphic line bundle $L$ is a measurable section of $L\otimes \overline{L^\ast}$ such that locally $h=e^{-\varphi}h_0$ where $h_0$ is a smooth hermitian metric and $\varphi$ is a psh function.
\end{defn}
Let $X$ be a hermitian manifold of pure complex dimension $n$ and $(E,h)$ be a holomorphic vector bundle on $X$ with a singular hermitian metric. Denote $\dbar:E\to E\otimes\Omega_X$  the canonical $\dbar$ operator. Let $L^k_{2}(E)$ be the space of square integrable $E$-valued $k$-forms. Denote $\dbar_{\rm max}$ to be the $\dbar$ operator on the domains
$$D^{p,q}_{X,\rm max}(E):=\textrm{Dom}^{p,q}(\dbar_{\rm max})=\{\phi\in L_2^{p,q}(E)|\dbar\phi\in L_2^{p,q+1}(E)\}$$
where $\dbar$ is taken in the sense of distribution and $0\leq i\leq \dim_\bR X$. 

The $L^2$ cohomology $H_{(2),\rm max}^{p,\bullet}(X,E;ds^2,h)$ is defined as the cohomology of the complex
\begin{align}\label{align_L2_dol_cohomology}
D^{p,\bullet}_{X,\rm max}(E):0\rightarrow D^{p,0}_{X,\rm max}(E)\stackrel{\dbar_{\rm max}}{\to}D^{p,1}_{X,\rm max}(E)\stackrel{\dbar_{\rm max}}{\to}\cdots\stackrel{\dbar_{\rm max}}{\to}D^{p,n}_{X,\rm max}(E)\to0.
\end{align}
Denote $\dbar_{\rm min}$ to be the graph closure operator of $\dbar:A_{\rm cpt}^{p,\ast}(E)\to A_{\rm cpt}^{p,\ast+1}(E)$. The $L^2$ cohomology $H_{(2),\rm min}^{p,\bullet}(X,E;ds^2,h)$ is defined as the cohomology of the complex
\begin{align}
D^{p,\bullet}_{X,\rm min}(E):0\rightarrow {\rm Dom}^{p,0}(\dbar_{\rm min})\stackrel{\dbar_{\rm min}}{\to}{\rm Dom}^{p,1}(\dbar_{\rm min})\stackrel{\dbar_{\rm min}}{\to}\cdots\stackrel{\dbar_{\rm min}}{\to}{\rm Dom}^{p,n}(\dbar_{\rm min})\to0.
\end{align}
These two $L^2$-cohomologies are related by the $L^2$-Serre duality
\begin{prop}\label{prop_Serre_dual}\cite[Proposition 1.3]{Pardon_Stern1991}
	When $h$ is smooth, there is a natural isomorphism
	$$H_{(2),\rm max}^{p,q}(X,E;ds^2,h)\simeq H_{(2),\rm min}^{n-q,n-p}(X,E^\ast;ds^2,h^\ast)^\ast$$ for every $p$, $q$ whenever $H_{(2),\rm max}^{p,q}(X,E;ds^2,h)$ is finite dimensional.
\end{prop}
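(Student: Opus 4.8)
The plan is to exhibit the two complexes of the statement as mutually adjoint Hilbert complexes, to apply the general duality between a Hilbert complex and its adjoint, and finally to upgrade reduced cohomology to ordinary cohomology by means of the finiteness hypothesis. Throughout I work on the hermitian manifold $X$ with its fixed smooth data $(ds^2,h)$, so that all operators below have smooth coefficients. First I would introduce the relevant conjugate-linear Hodge-star operator: since $h$ is smooth, the Hodge star of $ds^2$ together with the conjugate-linear isomorphism $E\simeq E^\ast$ induced by $h$ produces isometries $\bar\ast$ between $L^2$-spaces of $E$-valued forms and $L^2$-spaces of $E^\ast$-valued forms of the complementary bidegree occurring in the statement, with inverse the analogous operator built from $(E^\ast,h^\ast)$. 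Its defining property is $\langle\alpha,\beta\rangle=\int_X\alpha\wedge\bar\ast\beta$, the integrand being the scalar $(n,n)$-form obtained from the tautological pairing $E\otimes E^\ast\to\bC$; consequently the induced $\bC$-bilinear wedge-integration pairing between $E$-valued and $E^\ast$-valued forms of complementary bidegrees is continuous and non-degenerate, and it is this pairing that descends to the cohomologies in the statement.

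The heart of the matter is the operator identity
$$(\dbar^E_{\rm max})^\ast=\vartheta^E_{\rm min},$$
where $\vartheta^E$ denotes the formal adjoint of $\dbar^E$ and the subscripts are the maximal (distributional) and minimal (graph closure of the compactly supported operator) closed extensions. The inclusion $\vartheta^E_{\rm min}\subseteq(\dbar^E_{\rm max})^\ast$ is integration by parts: for compactly supported $v$ there is no boundary term, and the identity persists after passing to the graph closure. The reverse inclusion is the delicate point, and I expect it to be the main obstacle, precisely because $X$ need not be complete and the two extensions genuinely differ; it follows from the Friedrichs mollifier lemma, which bounds the commutator of a mollification with the first-order operator $\dbar$ and thereby shows that every element of the domain of $(\dbar^E_{\rm max})^\ast$ is a graph-limit of compactly supported smooth forms. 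Conjugating this identity by $\bar\ast$, and using that $\bar\ast$ interchanges $\dbar$ with $\pm\vartheta$ and carries compactly supported forms to compactly supported forms, identifies the Hilbert-space adjoint of the complex $D^{p,\bullet}_{X,\rm max}(E)$ with the minimal complex of $E^\ast$ in the complementary bidegrees.

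With the two complexes realized as mutual adjoints, I would invoke the abstract duality of Hilbert complexes. Every Hilbert complex admits a weak Hodge decomposition whose space of harmonic elements $\ker\dbar\cap\ker\vartheta$ represents its reduced cohomology, and this same harmonic space represents the reduced cohomology of the adjoint complex in the complementary degree; pairing by the $L^2$-inner product then gives a natural isomorphism of each reduced cohomology with the dual of the reduced cohomology of the adjoint complex. In our situation this yields a natural isomorphism
$$\overline{H}^{p,q}_{(2),\rm max}(X,E)\simeq\overline{H}^{n-q,n-p}_{(2),\rm min}(X,E^\ast)^\ast$$
of reduced $L^2$-cohomologies.

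It remains to remove the bars under the finiteness hypothesis. If $H^{p,q}_{(2),\rm max}(X,E)$ is finite-dimensional, then the range of $\dbar_{\rm max}$ in the preceding degree has finite codimension in the closed kernel of $\dbar_{\rm max}$ at $(p,q)$, and a bounded operator whose range has finite codimension has closed range; hence this range is closed and reduced cohomology agrees with ordinary cohomology at the relevant spot. By the closed-range theorem the range of the adjoint operator is then closed as well, so the same coincidence holds at the dual spot on the $E^\ast$-side. Combining these with the isomorphism of reduced cohomologies produces the asserted natural isomorphism
$$H^{p,q}_{(2),\rm max}(X,E;ds^2,h)\simeq H^{n-q,n-p}_{(2),\rm min}(X,E^\ast;ds^2,h^\ast)^\ast,$$
completing the argument.
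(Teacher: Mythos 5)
The paper itself offers no proof of this proposition; it simply cites Pardon--Stern, and your architecture (conjugate-linear Hodge star, the operator identity $(\dbar_{\max})^\ast=\vartheta_{\min}$, harmonic representation of reduced cohomology for a Hilbert complex and its adjoint, then the finiteness hypothesis to pass from reduced to ordinary cohomology) is exactly the standard route behind their Proposition 1.3. However, two of your justifications are off. The identity $(\dbar^E_{\max})^\ast=\vartheta^E_{\min}$ is true, but the mechanism you propose fails: Friedrichs mollification is a purely local smoothing device and cannot manufacture \emph{compactly supported} graph-approximants on an incomplete manifold --- producing such cutoffs is precisely what requires completeness (Gaffney), and if your argument worked for arbitrary elements of ${\rm Dom}(\vartheta_{\max})$ it would prove $\vartheta_{\min}=\vartheta_{\max}$, which is false in exactly the incomplete situations this proposition is designed for. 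The correct proof is soft: by the very definition of the distributional (maximal) extension, $\bigl(\vartheta|_{\rm cpt}\bigr)^\ast=\dbar_{\max}$, whence $(\dbar_{\max})^\ast=\bigl(\vartheta|_{\rm cpt}\bigr)^{\ast\ast}=\overline{\vartheta|_{\rm cpt}}=\vartheta_{\min}$ by von Neumann's theorem $T^{\ast\ast}=\overline{T}$. Neither inclusion is delicate; the nonlocal content lives in the adjoint formalism, not in a commutator estimate.

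The second issue is a genuine logical gap in your final step. Writing $T_{q-1}=\dbar_{\max}\colon L^{p,q-1}\to L^{p,q}$ and $T_q=\dbar_{\max}\colon L^{p,q}\to L^{p,q+1}$, finite-dimensionality of $H^{p,q}_{(2),\max}$ gives closedness of ${\rm ran}(T_{q-1})$, and the closed range theorem then gives closedness of ${\rm ran}(T_{q-1}^\ast)$ --- but that is the differential \emph{out of} the complementary spot on the $E^\ast$-side. What your bar-removal there needs is closedness of ${\rm ran}(T_q^\ast)$, equivalently of ${\rm ran}(T_q)$, and this is not implied by the hypothesis: abstractly, for the complex $0\to H\stackrel{T}{\to}H\to 0$ with $T$ injective with dense non-closed range, the kernel cohomology vanishes (so is finite dimensional) while $H/{\rm ran}(T^\ast)$ is infinite dimensional and non-Hausdorff. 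So the minimal-side group $\ker T_{q-1}^\ast/{\rm ran}\,T_q^\ast$ may fail to be Hausdorff, and ``the same coincidence holds at the dual spot'' does not follow. The repair is to interpret the dual as the \emph{topological} dual, as Pardon--Stern do: every continuous functional kills $\overline{{\rm ran}\,T_q^\ast}$ and hence factors through the reduced group, which your harmonic-space duality identifies with $\mathcal{H}^{p,q}\simeq H^{p,q}_{(2),\max}$; with that reading your argument closes, while with the algebraic dual the asserted isomorphism need not hold at all.
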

Let $X$ be a complex analytic space and $X^o\subset X_{\rm reg}$ be a dense Zariski open subset of the regular locus $X_{\rm reg}$. Let $ds^2$ be a hermitian metric on $X^o$ and $(E,h)$ be a line bundle with a singular metric on $X^o$.
Let $U\subset X$ be an open subset and define $L_{X,ds^2}^{p,q}(E,h)(U)$ to be the space of locally square integrable measurable $E$-valued $(p,q)$-forms on $U\cap X^o$.
For each $(p,q)$ we define a sheaf $\sD_{X,ds^2}^{p,q}(E,h)$ on $X$ by
$$\sD_{X,ds^2}^{p,q}(E,h)(U):=\{\phi\in L_{X,ds^2}^{p,q}(E,h)(U)|\bar{\partial}_{\rm max}\phi\in L_{X,ds^2}^{p,q+1}(E,h)(U)\}$$
for each open subset $U\subset X$.

Define the $L^2$-Dolbeault complex of sheaves $\sD_{X,ds^2}^{p,\bullet}(E,h)$ as
\begin{align}\label{align_D_complex2}
\sD_{X,ds^2}^{p,0}(E,h)\stackrel{\dbar_{\rm max}}{\to}\sD_{X,ds^2}^{p,1}(E,h)\stackrel{\dbar_{\rm max}}{\to}\cdots\stackrel{\dbar_{\rm max}}{\to}\sD_{X,ds^2}^{p,n}(E,h)\to0.
\end{align}
When $X$ is compact and $\sD_{X,ds^2}^{p,\bullet}(E,h)$ is a complex of fine sheaves, we have
\begin{align}
\bH^q(X,\sD_{X,ds^2}^{p,\bullet}(E,h))\simeq H^{p,q}_{(2)}(X,E;ds^2,h).
\end{align}
Next we introduce Demailly's formulation of H\"ormander's estimate over an arbitrary K\"ahler metric. The readers may consult \cite{Demailly2012} for details.
\begin{prop}\label{prop_Hormander_smooth}
	Let $(Y,\omega)$ be a complete K\"ahler manifold of dimension $n$. Let $(L,h)$ be a line bundle with a smooth metric such that $T=[\sqrt{-1}\Theta_h(L),\Lambda_\omega]>0$ on the space of $L-$valued smooth $(p,q)-$forms $A^{p,q}_Y\otimes L$. Then for every $\alpha\in L^{p,q}_{(2)}(Y,L;\omega,h)$, $q\geq1$ such that $\dbar\alpha=0$, there is $\beta\in L^{n,q-1}_{(2)}(Y,L;\omega,h)$ such that $\dbar\beta=\alpha$ and 
	$$\int_Y|\beta|^2_{h}{\rm vol}_\omega\leq \int_Y(T^{-1}\alpha,\alpha)_h{\rm vol}_\omega.$$
\end{prop}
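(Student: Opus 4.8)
The plan is to run the classical Andreotti--Vesentini--H\"ormander argument in the form systematized by Demailly. Note first that since $\beta$ is required to be of type $(n,q-1)$, the form $\alpha$ must be of type $(n,q)$ (the ``$p$'' in the statement is a typo for $n$); this is precisely the bidegree in which the curvature operator $T=[\sqrt{-1}\Theta_h(L),\Lambda_\omega]$ controls the complex Laplacian. The two ingredients are the Bochner--Kodaira--Nakano a priori inequality and a Hilbert-space duality argument, and completeness of $\omega$ is what allows the two to be combined.

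First I would record the Bochner--Kodaira--Nakano identity on the K\"ahler manifold $(Y,\omega)$: for every smooth compactly supported $L$-valued $(n,q)$-form $u$,
\[
\|\dbar u\|_h^2+\|\dbar^\ast u\|_h^2=\|\nabla'' u\|_h^2+\int_Y(Tu,u)_h\,\mathrm{vol}_\omega .
\]
Discarding the nonnegative term $\|\nabla'' u\|_h^2$ yields the Nakano inequality $\int_Y(Tu,u)_h\,\mathrm{vol}_\omega\leq\|\dbar u\|_h^2+\|\dbar^\ast u\|_h^2$, where the hypothesis $T>0$ guarantees the left side is a genuine quadratic form. The next point is to extend this from compactly supported forms to all $u\in\mathrm{Dom}(\dbar)\cap\mathrm{Dom}(\dbar^\ast)$. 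Here completeness of $\omega$ enters: one picks cutoffs $\chi_\nu\nearrow 1$ with $|d\chi_\nu|_\omega\to 0$ (a complete metric admits an exhaustion with bounded gradient), and checks that $\chi_\nu u\to u$ in the graph norm of $\dbar\oplus\dbar^\ast$ while the commutators $[\dbar,\chi_\nu]$, $[\dbar^\ast,\chi_\nu]$ tend to $0$. This extends the inequality to the closed operators, which moreover satisfy $\dbar_{\min}=\dbar_{\max}$ by the same density, so no ambiguity of extension remains.

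With the a priori estimate in hand I would run the duality argument. Given $\dbar\alpha=0$, take any $(n,q)$-form $v\in\mathrm{Dom}(\dbar^\ast)$ and decompose orthogonally $v=v_1+v_2$ with $v_1\in\ker\dbar$ and $v_2\in(\ker\dbar)^\perp$. Since $\mathrm{Im}\,\dbar\subseteq\ker\dbar$ one has $(\ker\dbar)^\perp\subseteq(\mathrm{Im}\,\dbar)^\perp=\ker\dbar^\ast$, so $\dbar^\ast v_2=0$, giving $v_1\in\mathrm{Dom}(\dbar)\cap\mathrm{Dom}(\dbar^\ast)$ with $\dbar v_1=0$ and $\dbar^\ast v_1=\dbar^\ast v$. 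Because $\alpha\in\ker\dbar$ we have $\langle\alpha,v\rangle=\langle\alpha,v_1\rangle$, and since $T$ is a pointwise positive self-adjoint endomorphism (so $T^{\pm 1/2}$ are defined) the Cauchy--Schwarz inequality together with the extended Nakano estimate yields
\[
|\langle\alpha,v\rangle|^2\leq\Bigl(\int_Y(T^{-1}\alpha,\alpha)_h\,\mathrm{vol}_\omega\Bigr)\int_Y(Tv_1,v_1)_h\,\mathrm{vol}_\omega\leq\Bigl(\int_Y(T^{-1}\alpha,\alpha)_h\,\mathrm{vol}_\omega\Bigr)\|\dbar^\ast v\|_h^2 .
\]
Hence $\dbar^\ast v\mapsto\overline{\langle\alpha,v\rangle}$ is a well-defined bounded functional on $\mathrm{Im}(\dbar^\ast)$, and Hahn--Banach together with Riesz representation produces $\beta$ with $\|\beta\|_h^2\leq\int_Y(T^{-1}\alpha,\alpha)_h\,\mathrm{vol}_\omega$ and $\langle\dbar^\ast v,\beta\rangle=\langle\alpha,v\rangle$ for all admissible $v$, which is exactly $\dbar\beta=\alpha$ in the weak sense.

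The main obstacle is the density step of the second paragraph: one must verify that the $\ker\dbar$-projection keeps $v$ inside $\mathrm{Dom}(\dbar^\ast)$ and that the a priori inequality persists in the limit. This is exactly where completeness of $\omega$ is indispensable, since for an incomplete metric $\dbar_{\min}$ and $\dbar_{\max}$ may differ and the cutoff approximation breaks down; everything else is bookkeeping with the Bochner formula and standard functional analysis.
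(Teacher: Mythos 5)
The paper states this proposition without proof, citing Demailly's book for details; your argument --- the Bochner--Kodaira--Nakano a priori inequality, its extension from compactly supported forms to $\mathrm{Dom}(\dbar)\cap\mathrm{Dom}(\dbar^\ast)$ via cutoffs with $|d\chi_\nu|_\omega\to 0$ (which is where completeness enters), and the Hahn--Banach/Riesz duality step through the decomposition $v=v_1+v_2$ with $v_2\in(\ker\dbar)^\perp\subseteq\ker\dbar^\ast$ --- is exactly the standard proof given in that reference, and it is correct. Your resolution of the bidegree mismatch in the statement (reading $p=n$ so that the curvature term and the conclusion live in the same degree) is also the right interpretation.
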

The key technique that we use to prove Theorem \ref{thm_main_local} is the following H\"ormander type estimate, which is proved by Demailly by taking the limit of Proposition \ref{prop_Hormander_smooth}.
\begin{prop}\label{prop_Hormander_incomplete}
	Let $Y$ be a complex manifold of dimension $n$ which admits a complete K\"ahler metric. Let $(L,h)$ be a line bundle with a singular metric such that $\sqrt{-1}\Theta_h(L)\geq \omega$ for some K\"ahler form $\omega$ on $Y$. Then for every $\alpha\in L^{n,q}_{(2)}(Y,L;\omega,h)$, $q\geq1$ such that $\dbar\alpha=0$, there is $\beta\in L^{n,q-1}_{(2)}(Y,L;\omega,h)$ such that $\dbar\beta=\alpha$ and $\|\beta\|^2\leq q\|\alpha\|^2$.
\end{prop}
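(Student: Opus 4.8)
The plan is to deduce the singular, possibly incomplete statement from the smooth complete case (Proposition \ref{prop_Hormander_smooth}) by a double approximation, exploiting that the forms in question have bidegree $(n,q)$. Fix a complete K\"ahler metric $\hat\omega$ on $Y$, which exists by hypothesis, and set $\omega_\epsilon:=\omega+\epsilon\hat\omega$ for $\epsilon>0$; each $\omega_\epsilon$ is again a complete K\"ahler metric and $\omega_\epsilon\downarrow\omega$ as $\epsilon\to0$. Simultaneously, writing $h=h_0e^{-\varphi}$ locally with $h_0$ smooth and $\varphi$ quasi-psh, regularize $\varphi$ by a decreasing sequence of smooth weights $\varphi_\nu\downarrow\varphi$, producing smooth metrics $h_\nu\uparrow h$ on $L$ with $\sqrt{-1}\Theta_{h_\nu}(L)\geq(1-\delta_\nu)\omega$ and $\delta_\nu\to0$. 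The goal is to solve $\dbar\beta_{\epsilon,\nu}=\alpha$ on $(Y,\omega_\epsilon,h_\nu)$ by Proposition \ref{prop_Hormander_smooth} with bounds uniform in both parameters, and then let $\nu\to\infty$ and $\epsilon\to0$.

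The point that makes all the estimates uniform is the special structure of $(n,q)$-forms. First, the curvature operator $T_\epsilon=[\sqrt{-1}\Theta_{h_\nu}(L),\Lambda_{\omega_\epsilon}]$ acting on $L$-valued $(n,q)$-forms is positive definite, so Proposition \ref{prop_Hormander_smooth} applies and yields $\beta_{\epsilon,\nu}$ with $\dbar\beta_{\epsilon,\nu}=\alpha$ and $\int_Y|\beta_{\epsilon,\nu}|^2_{h_\nu}{\rm vol}_{\omega_\epsilon}\leq\int_Y(T_\epsilon^{-1}\alpha,\alpha)_{h_\nu}{\rm vol}_{\omega_\epsilon}$. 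Second, a pointwise computation in an $\omega_\epsilon$-orthonormal coframe diagonalizing $\sqrt{-1}\Theta_{h_\nu}(L)$ establishes two facts for an $(n,q)$-form $\alpha$: the density is monotone, $|\alpha|^2_{\omega_\epsilon}{\rm vol}_{\omega_\epsilon}\leq|\alpha|^2_\omega{\rm vol}_\omega$ because $\omega_\epsilon\geq\omega$; and, combining the eigenvalue bound coming from $\sqrt{-1}\Theta_{h_\nu}(L)\geq(1-\delta_\nu)\omega$ with the inequality between the arithmetic and geometric means applied to the curvature eigenvalues, $(T_\epsilon^{-1}\alpha,\alpha)_{h_\nu}{\rm vol}_{\omega_\epsilon}\leq\frac{1}{q(1-\delta_\nu)}|\alpha|^2_{\omega,h_\nu}{\rm vol}_\omega$. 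Since $h_\nu\uparrow h$ gives $|\alpha|_{h_\nu}\leq|\alpha|_h$, this produces the $\epsilon$- and $\nu$-uniform bound $\int_Y|\beta_{\epsilon,\nu}|^2_{h_\nu}{\rm vol}_{\omega_\epsilon}\leq\frac{1}{q(1-\delta_\nu)}\|\alpha\|^2_h$.

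With uniform $L^2$ bounds in hand I would extract weak limits. Fixing a reference pair $(\omega_{\epsilon_0},h_{\nu_0})$ and using $h_\nu\uparrow h$ and $\omega_\epsilon\downarrow\omega$ together with the monotonicity of the $(n,q)$-density, the family $\{\beta_{\epsilon,\nu}\}$ is bounded in $L^2_{\mathrm{loc}}$; a diagonal extraction gives $\beta_{\epsilon,\nu}\rightharpoonup\beta$ weakly. The equation $\dbar\beta=\alpha$ passes to the limit because $\dbar$ is continuous for distributions, and weak lower semicontinuity of the norm together with monotone convergence (first $\nu\to\infty$, letting $h_\nu\uparrow h$ and $\delta_\nu\to0$, then $\epsilon\to0$, letting $\omega_\epsilon\downarrow\omega$) upgrades the uniform bound to $\|\beta\|^2_h\leq\frac{1}{q}\|\alpha\|^2_h\leq q\|\alpha\|^2_h$, using $q\geq1$ for the last inequality.

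I expect the main obstacle to be the monotone regularization $h_\nu\uparrow h$ with the controlled curvature lower bound $\sqrt{-1}\Theta_{h_\nu}(L)\geq(1-\delta_\nu)\omega$, $\delta_\nu\to0$, on a general, possibly non-Stein, manifold $Y$; this is where one invokes Demailly's regularization of closed positive currents, and care is needed so that the loss $\delta_\nu$ really tends to zero and the approximation is globally defined. By contrast, the incompleteness of $\omega$ is harmless: it is absorbed by the $\omega_\epsilon$ device precisely because, in bidegree $(n,q)$, both the solvability and the estimate are governed by the curvature bound relative to the original $\omega$ rather than to $\omega_\epsilon$.
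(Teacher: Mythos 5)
Your reconstruction of the completeness half of the argument is exactly the route the paper intends: the paper gives no proof of Proposition \ref{prop_Hormander_incomplete} at all, only the remark that it ``is proved by Demailly by taking the limit of Proposition \ref{prop_Hormander_smooth}'', and that limit is precisely your device $\omega_\epsilon=\omega+\epsilon\hat\omega$ combined with the monotonicity of $(n,q)$-densities: for forms of top holomorphic degree one has $|\alpha|^2_{\omega_\epsilon}{\rm vol}_{\omega_\epsilon}\leq|\alpha|^2_{\omega}{\rm vol}_{\omega}$ and the analogous inequality for the curvature term, so the constant in the estimate survives as $\epsilon\to0$. Your weak-limit bookkeeping (uniform bounds in a fixed space attached to $(\omega_{\epsilon_0},h_{\nu_0})$, then monotone convergence) is also sound.

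The genuine gap is the step you flag as the ``main obstacle'' and then defer to Demailly's regularization theorem: the existence, on a general $Y$ admitting a complete K\"ahler metric, of \emph{global smooth} metrics $h_\nu\uparrow h$ with $\sqrt{-1}\Theta_{h_\nu}(L)\geq(1-\delta_\nu)\omega$ and $\delta_\nu\to0$. This is not a technical point that regularization theory settles; it is false in general. Note that a compact K\"ahler surface trivially admits a complete K\"ahler metric, so take $Y$ a surface containing a curve $C$ with $C^2<0$, let $A$ be ample with smooth metric $h_A$ of curvature $\omega>0$, and let $L=\sO_Y(C)\otimes A$ carry the metric $h$ which is the product of $h_A$ with the singular metric on $\sO_Y(C)$ whose curvature is $2\pi[C]$; then $\sqrt{-1}\Theta_h(L)=2\pi[C]+\omega\geq\omega$, so the hypotheses of the Proposition hold. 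If smooth metrics $h_\nu$ with $\sqrt{-1}\Theta_{h_\nu}(L)\geq(1-\delta_\nu)\omega$ existed, integrating the smooth closed form $\sqrt{-1}\Theta_{h_\nu}(L)$ over $C$ (the integral depends only on the cohomology class) would give $2\pi C^2+\int_C\omega\geq(1-\delta_\nu)\int_C\omega$, i.e.\ $2\pi C^2\geq-\delta_\nu\int_C\omega$, which fails once $\delta_\nu<2\pi|C^2|/\int_C\omega$. This obstruction is exactly why Demailly's regularization of closed positive currents carries an unavoidable loss of positivity comparable to the Lelong numbers: no theorem can make $\delta_\nu\to0$ with smooth approximants. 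The arguments that actually establish the Proposition avoid global smooth approximation: either one assumes $Y$ weakly pseudoconvex (in particular covered by the paper's application, where the Proposition is only invoked on $U\cap X^o$ sitting over a small Stein-like neighborhood $U$ in a ball), in which case decreasing smooth approximations with negligible loss exist by convolution and gluing along the exhaustion; or one uses equisingular approximations $\varphi_\nu$ with \emph{analytic} singularities, $\sI(\varphi_\nu)=\sI(\varphi)$, solves $\dbar$ on the Zariski-open set where $\varphi_\nu$ is smooth --- which again admits a complete K\"ahler metric --- and extends across the analytic set using the $L^2$ bound. As written, the approximating family your proof requires does not exist, so the $\nu$-limit cannot even be started in the general case.
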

\begin{defn}
	Let $(Y,ds^2)$ be a hermitian manifold and $(L,h)$ be a holomorphic line bundle with a singular hermitian metric $h$. The multiplier ideal sheaf $\sI(h)\subset\sO_Y$ associated to $h$ is the ideal sheaf consisting of holomorphic functions $f$ such that $fe^{-\varphi}$ is locally square integrable. Here $\varphi$ is locally defined by $h=h_0e^{-\varphi}$ for some smooth hermitian metric $h_0$.
\end{defn}
$\sI(h)$ is independent of the choice of $\varphi$, hence it is a well defined ideal sheaf. Moreover, it is a coherent ideal sheaf. By  definition the holomorphic sections $K_Y\otimes L\otimes \sI(h)$ consists of locally square integrable sections in $K_Y\otimes L$.
\section{$L^2$-resolution of the twisted Grauert-Riemenschneider canonical sheaf}
In this section we prove Theorem \ref{thm_main_local}.
\begin{thm}\label{thm_main_local1}
	Let $X$ be a complex analytic space of pure dimension $n$ and $ds^2$ be a hermitian metric on a dense Zariski open subset $X^o\subset X_{\rm reg}$ with $\omega$ its fundamental form. Let $\pi:M\to X$ be a proper holomorphic map such that $M$ is smooth and $\pi:\pi^{-1}X^o\to X^o$ is biholomorphic. Let $(L,h)$ be a holomorphic line bundle with a (possibly) singular hermitian metric $h$ on $M$. Denote by $\sI(h)$ the associated multiplier ideal sheaf. Assume that for every point $x\in X$, there is a neighborhood $x\in U$, a K\"ahler metric $g_U$ on $U\cap X^o$, a (possibly) singular hermitian metric $h'\sim h$ on $L|_{\pi^{-1}U}$ and a bounded $C^\infty$ strictly plurisubharmonic function $\Phi$ on $U\cap X^o$ such that $$g_U\sim\omega|_{U\cap X^o}\lesssim\sqrt{-1}\partial\dbar\Phi$$ on $U$ and 
	\begin{align}
	\sqrt{-1}\Theta_{h'}(L)\geq C\omega
	\end{align} 
	on $\pi^{-1}(U)$ as currents for some $C\in\bR$.
	Then the complex of sheaves
	\begin{align}\label{align_fine_resolution}
	0\to\pi_\ast(K_M\otimes L\otimes \sI(h))\to \sD^{n,0}_{X,ds^2}(L,h)\stackrel{\dbar}{\to}\sD^{n,1}_{X,ds^2}(L,h)\stackrel{\dbar}{\to}\cdots\stackrel{\dbar}{\to} \sD^{n,n}_{X,ds^2}(L,h)\to0
	\end{align}
	is exact. Here $\sD^{n,\bullet}_{X,ds^2}(L,h)$ denotes the complex consisting of measurable $L|_{X^o}$-valued $(n,\bullet)$-forms $\alpha$ such that $\alpha$ and its distributive $\dbar\alpha$ are locally square integrable.
\end{thm}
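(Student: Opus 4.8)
The plan is to verify exactness of \eqref{align_fine_resolution} stalkwise, so I fix a point $x\in X$ and work in a neighborhood $U$ on which all hypotheses hold. After shrinking I may take $U$ to be Stein, and then $U\cap X^o$ is Stein as well, being the complement of the analytic set $X_{\rm reg}\setminus X^o$ inside the Stein manifold $U\cap X_{\rm reg}$. In particular $U\cap X^o$ carries a complete K\"ahler metric; this is the only place completeness will be needed, namely as the existence hypothesis of Proposition \ref{prop_Hormander_incomplete}. The argument then separates into identifying the kernel in degree $0$ and proving local $\dbar$-solvability in degrees $q\geq 1$.

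For the kernel, an element of $\ker(\dbar\colon\sD^{n,0}_{X,ds^2}(L,h)\to\sD^{n,1}_{X,ds^2}(L,h))(U)$ is a locally square-integrable holomorphic $L$-valued $n$-form $\alpha$ on $U\cap X^o$. Through the biholomorphism $\pi\colon\pi^{-1}X^o\to X^o$ it corresponds to a holomorphic section $\tilde\alpha$ of $K_M\otimes L$ over $\pi^{-1}(U)\setminus E$, where $E:=\pi^{-1}(X\setminus X^o)$. The crucial observation is that for forms of bidegree $(n,0)$ the density $|\alpha|^2_{ds^2,h}\,dV_{ds^2}$ is independent of the base metric, hence equals $|\tilde\alpha|^2_{\hat\omega,h}\,dV_{\hat\omega}$ for any smooth metric $\hat\omega$ on $M$. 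Since $E$ is a null set and $\pi$ is proper, local square-integrability of $\alpha$ on $U$ is therefore equivalent to local square-integrability of $\tilde\alpha$ on $\pi^{-1}(U)$. A locally $L^2$ holomorphic section extends holomorphically across the analytic set $E$ by a Riemann-type removable singularity argument, and by the very definition of $\sI(h)$ the extension is a section of $K_M\otimes L\otimes\sI(h)$. This identifies the kernel with $\pi_\ast(K_M\otimes L\otimes\sI(h))$.

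For $q\geq 1$ I would solve $\dbar\beta=\alpha$ on $Y:=U\cap X^o$ with an $L^2$ bound. Given $\dbar$-closed $\alpha\in\sD^{n,q}_{X,ds^2}(L,h)(U)$, shrinking $U$ to a relatively compact $U'$ makes $\alpha$ globally square-integrable on $Y':=U'\cap X^o$. The key manoeuvre is to absorb the (possibly negative) curvature bound $\sqrt{-1}\Theta_{h'}(L)\geq C\omega$ using the bounded potential: replace $h$ by $\tilde h:=h'e^{-\lambda\Phi}$. Because $g_U\sim\omega\lesssim\sqrt{-1}\partial\dbar\Phi$, the form $\sqrt{-1}\partial\dbar\Phi$ dominates a positive multiple of $g_U$, so for $\lambda$ sufficiently large
\[
\sqrt{-1}\Theta_{\tilde h}(L)=\sqrt{-1}\Theta_{h'}(L)+\lambda\sqrt{-1}\partial\dbar\Phi\geq g_U
\]
on $Y'$. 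Proposition \ref{prop_Hormander_incomplete}, applied on $Y'$ with K\"ahler form $g_U$ and metric $\tilde h$ (the complete K\"ahler metric above serving only to license the proposition), then produces $\beta$ with $\dbar\beta=\alpha$ and $\|\beta\|^2_{g_U,\tilde h}\leq q\|\alpha\|^2_{g_U,\tilde h}$.

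It remains to transfer the conclusion back to the original data, which is also where the main obstacle lies. Since $\Phi$ is bounded, $e^{-\lambda\Phi}$ is bounded above and below, so $\tilde h\sim h'\sim h$; combined with $g_U\sim\omega$ this forces the $L^2$ norms of $(n,\bullet)$-forms taken with respect to $(g_U,\tilde h)$ and to $(ds^2,h)$ to be equivalent. Hence $\alpha$ lies in the relevant $L^2$ space for $(g_U,\tilde h)$, and the solution $\beta$ lands in $\sD^{n,q-1}_{X,ds^2}(L,h)(U')$, giving exactness in degrees $q\geq 1$. The delicate point is precisely the curvature bookkeeping above: one must gain strict positivity of $\sqrt{-1}\Theta(L)$ relative to $g_U$ without disturbing the quasi-isometry class that controls the $L^2$ spaces. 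The boundedness of $\Phi$ is exactly what makes this possible, since twisting by $e^{-\lambda\Phi}$ injects curvature while preserving $\tilde h\sim h$; this is where the approach departs from the Donnelly--Fefferman method, which would instead demand that $|\partial\Phi|$ be bounded.
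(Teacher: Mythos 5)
Your argument follows the same route as the paper's proof: gain positivity by twisting with $e^{-\lambda\Phi}$, using the boundedness of $\Phi$ to keep $\tilde h\sim h'\sim h$; apply Demailly's form of the H\"ormander estimate (Proposition \ref{prop_Hormander_incomplete}) with respect to the K\"ahler form $g_U$ and transfer back through the quasi-isometries $g_U\sim\omega$, $\tilde h\sim h$; and identify the kernel in degree zero via the metric-independence of the $(n,0)$-density (the paper does this last point by an explicit frame computation in which the eigenvalues $\lambda_i$ cancel; your version, which makes the removable-singularity extension across $E$ explicit, is if anything slightly more complete there).

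There is, however, one step that fails as written: the claim that after shrinking, $U\cap X^o$ is Stein because it is ``the complement of the analytic set $X_{\rm reg}\setminus X^o$ inside the Stein manifold $U\cap X_{\rm reg}$.'' Neither half of this is valid in general. The set $U\cap X_{\rm reg}=U\setminus X_{\rm sing}$ is the complement of an analytic subset that may have codimension $\geq 2$ (take $X$ normal with an isolated singularity at $x$ and $n\geq 2$, e.g.\ the quadric cone in $\bC^3$); then every holomorphic function on $U\setminus\{x\}$ extends across $x$, so a sequence converging to $x$ lies in the holomorphically convex hull of a compact ``sphere'' around $x$, and holomorphic convexity fails --- $U\setminus\{x\}$ is not Stein. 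The same objection applies to removing $X_{\rm reg}\setminus X^o$ when that set has codimension $\geq 2$: the complement of an analytic subset of a Stein manifold is Stein only when the subset is a hypersurface. Since the existence of a complete K\"ahler metric on $U\cap X^o$ is precisely what licenses Proposition \ref{prop_Hormander_incomplete}, this is a genuine gap, not a cosmetic one. It is repairable, because the fact you need is true but requires the Grauert/Pardon--Stern construction that the paper isolates as Lemma \ref{lem_complete_metric_exists_locally}: embed $U$ into a ball $B_c\subset\bC^N$, let $f_1,\dots,f_m$ cut out the complement of $X^o$, and verify that $G=-\log(c^2-|z|^2)-\log\left(-\log\sum_{i=1}^m|f_i|^2\right)$ has $\sqrt{-1}\partial\dbar G$ a complete K\"ahler metric on $U\cap X^o$. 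With that substitution in place of the Stein claim, the rest of your proof goes through and coincides with the paper's.
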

\begin{proof}
	By assumption, there is $C'>0$ such that $C'\sqrt{-1}\partial\dbar\Phi\geq \omega|_{U\cap X^o}$. Let $h''=e^{C'(C-1)\Phi}h'$. Since $\Phi$ is bounded, we have $h''\sim h'\sim h$ and
	$$\sqrt{-1}\Theta_{h''}(L)=(1-C)C'\sqrt{-1}\partial\dbar\Phi+\sqrt{-1}\Theta_{h'}(L)\geq \omega.$$
	By Lemma \ref{lem_complete_metric_exists_locally} we may assume that $U\cap X^o$ admits a complete K\"ahler metric. By Proposition \ref{prop_Hormander_incomplete}, we have
	$$H^q_{(2)}(U\cap X^o, L|_{U\cap X^o};ds^2,h)\simeq H^q_{(2)}(U\cap X^o, L|_{U\cap X^o};g_U,h'')=0,\quad \forall q>0.$$
	This proves the exactness of (\ref{align_fine_resolution}) at $\sD^{n,q}_{X,ds^2}(L,h)$, $q>0$. It remains to show that 
	\begin{align}\label{align_exact_at_0}
	\pi_\ast(K_M\otimes L\otimes \sI(h))=\ker\left(\dbar:\sD^{n,0}_{X,ds^2}(L,h)\to\sD^{n,1}_{X,ds^2}(L,h)\right).
	\end{align}
	Let $V\subset X$ be an open subset. Since $\pi$ is a proper morphism, it suffices to show that a holomorphic section $s\in \Gamma(\pi^{-1}V,K_M\otimes L)$ is locally square integrable  with respect to the (possibly degenerate) metric $\pi^\ast ds^2$ and $h$, if and only if it lies in $\Gamma(\pi^{-1}V,K_M\otimes L\otimes \sI(h))$. Let $V'\subset \pi^{-1}V$ be an open subset such that there is an orthogonal frame of cotangent fields $\delta_1,\dots,\delta_n$ such that 
	\begin{align}\label{align_metric1}
	\pi^\ast ds^2=\lambda_1\delta_1\overline{\delta_1}+\cdots+\lambda_n\delta_n\overline{\delta_n}.
	\end{align}
	Denote by $g$  a K\"ahler form on $M$, then on $V'$ we have
	\begin{align}\label{align_metric2}
	g\sim\delta_1\overline{\delta_1}+\cdots+\delta_n\overline{\delta_n}.
	\end{align}
	Denote $\gamma=\delta_1\wedge\cdots\wedge\delta_n\otimes\xi$.
	By (\ref{align_metric1}) and (\ref{align_metric2}) we obtain
	\begin{align}
	\|\gamma|_{V'}\|^2_{\pi^\ast ds^2,h}&=\int_{V'}|\delta_1\wedge\cdots\wedge\delta_n\otimes\xi|^2_{\pi^\ast ds^2,h}\prod_{i=1}^n\lambda_i\delta_i\wedge\overline{\delta_n}\\\nonumber
	&=\int_{V'}|\xi|^2_{h}\prod_{i=1}^n\delta_i\wedge\overline{\delta_n}\\\nonumber
	&\sim \|\gamma|_{V'}\|^2_{g,h}.
	\end{align}
	This shows that $\|\gamma|_{V'}\|^2_{\pi^\ast ds^2,h}<\infty$ if and only if $\|\gamma|_{V'}\|^2_{g,h}<\infty$, where the latter is equivalent to $\gamma\in \Gamma(V',K_M\otimes L\otimes \sI(h))$. This proves (\ref{align_exact_at_0}). Hence we obtain the first part of Theorem \ref{thm_main_local}. To prove the second part, it suffices to show that $\sD^{n,\bullet}_{X,ds^2}(L,h)$ is a complex of fine sheaves. This is proved in Lemma \ref{lem_fine_sheaf}.
\end{proof}

\begin{lem}\label{lem_complete_metric_exists_locally}
	Let $x\in X$ be a point in a complex analytic space and $X^o\subset X_{\rm{reg}}$ be a dense Zariski open subset. There is a neighborhood $x\in U$ and a complete K\"ahler metric on $U\cap X^o$.
\end{lem}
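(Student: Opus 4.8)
The plan is to reduce to a local model and then write down an explicit complete Kähler metric of Poincaré type on the complement of the ``bad set''. First I would take a local embedding: after shrinking, a neighborhood of $x$ is biholomorphic to a closed analytic subset $Z$ of an open ball $B=B(0,1)\subset\bC^N$, with $x\mapsto 0$. Write $Z^o$ for the image of $U\cap X^o$; it is an open subset of the complex manifold $Z_{\rm reg}$, and $\Sigma:=Z\setminus Z^o$ is a closed analytic subset of $Z$ containing $Z_{\rm sing}$, so that $Z^o$ is a genuine manifold. Since $Z$ is Stein (being closed in $B$), Cartan's Theorem A gives finitely many $g_1,\dots,g_m\in\sO(Z)$ generating the ideal sheaf of $\Sigma$; after rescaling the $g_j$ by a small constant I may assume $|g|^2:=\sum_j|g_j|^2\le e^{-1}$ on $Z$, so that $-\log|g|^2\ge 1$.

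Next I would assemble the metric on $Z^o$ from three pieces: the restriction $\beta:=\sqrt{-1}\partial\dbar|z|^2|_{Z^o}$ of the Euclidean form, which is an honest Kähler metric on the manifold $Z^o$; the function $\eta:=-\log(1-|z|^2)$, which tends to $+\infty$ as one approaches $\partial B$; and $\rho:=-\log(-\log|g|^2)$, which is plurisubharmonic because $\log|g|^2$ is, is bounded above, and whose negative $-\rho=\log(-\log|g|^2)\ge 0$ tends to $+\infty$ as one approaches $\Sigma$. I then set
$$\omega:=\beta+\sqrt{-1}\partial\dbar\eta+\sqrt{-1}\partial\dbar\rho.$$
As $\eta$ and $\rho$ are psh the last two terms are $\ge 0$, so $\omega\ge\beta>0$ is a smooth Kähler metric on $Z^o$ (closed, being a sum of closed forms).

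Finally I would establish completeness through the bounded-gradient criterion. The two chain-rule computations
$$\sqrt{-1}\partial\dbar\eta=\frac{\sqrt{-1}\partial\dbar|z|^2}{1-|z|^2}+\sqrt{-1}\partial\eta\wedge\dbar\eta,\qquad \sqrt{-1}\partial\dbar\rho=\frac{\sqrt{-1}\partial\dbar\log|g|^2}{-\log|g|^2}+\sqrt{-1}\partial\rho\wedge\dbar\rho$$
have nonnegative first terms (using $|z|^2<1$, $-\log|g|^2\ge 1$, and $\sqrt{-1}\partial\dbar\log|g|^2\ge 0$), hence $\sqrt{-1}\partial\dbar\eta\ge\sqrt{-1}\partial\eta\wedge\dbar\eta$ and $\sqrt{-1}\partial\dbar\rho\ge\sqrt{-1}\partial\rho\wedge\dbar\rho$. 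Since $\sqrt{-1}\,\alpha\wedge\bar\alpha\le\omega$ is equivalent to $|\alpha|_\omega\le 1$ for a $(1,0)$-form $\alpha$, this yields the gradient bounds $|\partial\eta|_\omega\le 1$ and $|\partial\rho|_\omega\le 1$. Therefore $\Psi:=\eta-\rho=-\log(1-|z|^2)+\log(-\log|g|^2)$ satisfies $|d\Psi|_\omega\le 2$; its two summands are nonnegative and blow up exactly along $\partial B$ and $\Sigma$ respectively, so its sublevel sets $\{\Psi\le c\}$ are relatively compact in $Z^o$ and $\Psi$ is an exhaustion. A Kähler manifold admitting an exhaustion of bounded gradient is complete, so $\omega$ transported back to $U\cap X^o$ is the desired complete Kähler metric.

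The genuinely structural input is the Steinness of the local model, which furnishes global defining functions $g_j$ of the bad set; the two Hessian inequalities are then routine. I expect the main point to watch to be the simultaneous properness of $\Psi$ towards both the singular/removed locus $\Sigma$ and the artificial boundary $\partial B$: it is exactly the normalization $|g|^2\le e^{-1}$ (forcing $-\rho\ge 0$) together with $\eta\ge 0$ that keeps the sublevel sets from escaping along either end, and hence that guarantees completeness in all directions at once.
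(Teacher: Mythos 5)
Your proposal is correct and is essentially the paper's own argument: the paper also embeds a neighborhood into a ball, takes holomorphic functions cutting out the removed locus, and uses the potential $G=-\log(c^2-|z|^2)-\log\bigl(-\log\sum_i|f_i|^2\bigr)$, which is exactly your $\eta+\rho$ (your added term $\beta$ is redundant since $\sqrt{-1}\partial\dbar\eta$ already dominates it). The only difference is that the paper cites Pardon--Stern (Lemma 2.4) for completeness, whereas you verify it directly via the bounded-gradient exhaustion criterion.
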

\begin{proof}
	The construction follows that in \cite{Pardon_Stern1991}. Take an open neighborhood $U$ which is embedded holomorphically into $\bC^N$, such that $x=0\in \bC^N$ and $U=B_c:=\left\{z\in\bC^N\big||z|<c\right\}$. Assume that $X^o\cap B_c$ is defined by holomorphic functions $f_1,\dots,f_m\in\sO(B_c)$ and $0<c$ is small enough so that
	$$\sum_{i=1}^m|f_i|^2\ll1.$$ 
	We set
	$$G=-\log(c^2-|z|^2)-\log\left(-\log\sum_{i=1}^m|f_i|^2\right).$$
	Then $\sqrt{-1}\partial\dbar G$ is a complete K\"ahler metric on $U\cap X^o$ (c.f. \cite{Pardon_Stern1991} Lemma 2.4).
\end{proof}
The following lemma gives a sufficient condition which guarantee that $\sD^{p,q}_{X,ds^2}(L,h)$ is a fine sheaf.
\begin{lem}\label{lem_fine_sheaf}
	Let $X$ be an analytic space and $ds^2$ be a hermitian metric on $X_{\rm reg}$. Suppose that $ds^2_0\lesssim ds^2$ where $ds^2_0$ is a hermitian metric on $X$ (Definition \ref{defn_hermitian_metric}). Then for each $p$, $q$, $\sD^{p,q}_{X,ds^2}(L,h)$ is a fine sheaf.
\end{lem}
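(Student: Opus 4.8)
The plan is to exhibit $\sD^{p,q}_{X,ds^2}(L,h)$ as a module over the sheaf $C^\infty_X$ of smooth functions on $X$ (functions that are locally restrictions of smooth functions under a local embedding $X\hookrightarrow\bC^N$). Since $X$ is paracompact and $C^\infty_X$ admits smooth partitions of unity subordinate to any locally finite open cover --- obtained by restricting the standard smooth partitions of unity on the ambient $\bC^N$ --- every $C^\infty_X$-module sheaf is automatically fine. Thus the entire content reduces to a single claim: multiplication by a smooth function $\rho\in C^\infty_X(U)$ carries $\sD^{p,q}_{X,ds^2}(L,h)(U)$ into itself.

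To prove this claim, fix $\alpha\in\sD^{p,q}_{X,ds^2}(L,h)(U)$, so that $\alpha$ and $\dbar_{\rm max}\alpha$ are locally square integrable with respect to $ds^2$ and $h$. First, $\rho\alpha$ is again locally square integrable because $\rho$ is locally bounded. By the distributional Leibniz rule --- valid since $\rho$ is smooth --- we have $\dbar(\rho\alpha)=\dbar\rho\wedge\alpha+\rho\,\dbar\alpha$. The term $\rho\,\dbar\alpha$ is locally square integrable, again because $\rho$ is locally bounded and $\dbar\alpha$ is locally square integrable. Hence everything comes down to showing that $\dbar\rho\wedge\alpha$ is locally square integrable; once this is established, $\rho\alpha$ lies in the domain of $\dbar_{\rm max}$ with the stated value, and the claim follows.

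The crucial step --- and the place where the hypothesis $ds^2_0\lesssim ds^2$ enters --- is a pointwise bound on $\dbar\rho$. Writing $g_0,g$ for the quadratic forms associated to $ds^2_0, ds^2$ on the tangent bundle, the relation $ds^2_0\lesssim ds^2$ reads $g_0\preceq Cg$, and passing to the induced dual metrics on the cotangent bundle reverses the inequality to $g^{-1}\preceq Cg_0^{-1}$, since matrix inversion is order-reversing. Consequently $|\dbar\rho|_{ds^2}\leq C^{1/2}|\dbar\rho|_{ds^2_0}$ pointwise. Because $\rho$ is smooth and $ds^2_0$ is a genuine hermitian metric on all of $X$ (Definition \ref{defn_hermitian_metric}), the quantity $|\dbar\rho|_{ds^2_0}$ is locally bounded; therefore so is $|\dbar\rho|_{ds^2}$. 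The elementary pointwise estimate $|\dbar\rho\wedge\alpha|_{ds^2}\lesssim|\dbar\rho|_{ds^2}\,|\alpha|_{ds^2}$ then yields, after integrating against ${\rm vol}_{ds^2}$ over a relatively compact subset $V\Subset U\cap X^o$, the bound $\int_V|\dbar\rho\wedge\alpha|^2_{ds^2}\,{\rm vol}_{ds^2}\lesssim\big(\sup_V|\dbar\rho|^2_{ds^2}\big)\int_V|\alpha|^2_{ds^2}\,{\rm vol}_{ds^2}<\infty$. This establishes that $\dbar\rho\wedge\alpha$ is locally square integrable and completes the verification.

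I expect the only genuinely delicate point to be the direction of the metric comparison: it is precisely because the comparison metric $ds^2_0$ is \emph{dominated} by $ds^2$ --- so that the inverse metric dominates in reverse and the $ds^2$-norm of a cotangent vector is controlled by its $ds^2_0$-norm --- that the differential of an ambient-smooth function stays bounded, which is what keeps $\dbar\rho\wedge\alpha$ square integrable. Had the inequality gone the other way, $|\dbar\rho|_{ds^2}$ could blow up near $X_{\rm sing}$ and the module structure would fail. The remaining ingredients --- local boundedness of $\rho$, the distributional product rule, and the passage from ``$C^\infty_X$-module'' to ``fine sheaf'' via partitions of unity on the paracompact space $X$ --- are standard.
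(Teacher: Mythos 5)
Your proof is correct and follows essentially the same route as the paper's: reduce fineness to multiplication by cutoff functions that are restrictions of ambient smooth functions, and control $|\dbar\rho|_{ds^2}$ by $|\dbar\rho|_{ds^2_0}$ (the inequality reversing on cotangent vectors), the latter being locally bounded via the embedding in Definition \ref{defn_hermitian_metric}. You merely make explicit what the paper leaves implicit (the $C^\infty_X$-module structure, the Leibniz rule, and the wedge estimate), while the paper makes explicit what you compress (the orthogonal decomposition $T_{M,x}=T_{U_{\rm reg},x}\oplus T_{U_{\rm reg},x}^\bot$ behind the local boundedness of $|\dbar\rho|_{ds^2_0}$).
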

\begin{proof}
	If suffices to show that for every $W\subset\overline{W}\subset U\subset X$ where $W$ and $U$ are open subsets, there is a continuous function $f$ on $U$ such that
	\begin{itemize}
		\item ${\rm supp}(f)\subset \overline{W}$,
		\item $f$ is $C^\infty$ on $U_{\rm reg}$
		\item $\dbar f$ has bounded fiberwise norm with respect to the metric $ds^2_0$.
	\end{itemize}
	Choose a closed embedding $U\subset M$ where $M$ is a smooth complex manifold $M$. Let $V\subset\overline{V}\subset M$ where $V$ is an open subset such that $V\cap U=W$. Let $ds^2_M$ be a hermitian metric on $M$ so that $ds^2_0|_{U_{\rm reg}}\sim ds^2_M|_{U_{\rm reg}}$. Let $g$ be a smooth function on $M$ whose support lies in $\overline{V}$. Denote $f=g|_U$ then apparently ${\rm supp}(f)\subset \overline{W}$ and $f$ is $C^\infty$ on $U_{\rm reg}$. It suffices to show the boundedness of the fiberwise norm of $\dbar f$. Since $U_{\rm reg}\subset M$ is a submanifold, one has the orthogonal decomposition 
	\begin{align}
	T_{M,x}=T_{U_{\rm reg},x}\oplus T_{U_{\rm reg},x}^\bot, \quad \forall x\in U_{\rm reg}.
	\end{align} 
	Therefore $|\dbar f|_{ds^2}\lesssim|\dbar f|_{ds^2_0}\leq |g|_{ds^2_M}<\infty$. The lemma is proved.
\end{proof}
We are ready to prove Theorem \ref{thm_main_global}
\begin{thm}\label{thm_main_global1}
	Notations as in Theorem \ref{thm_main_local}. Assume that 
	\begin{enumerate}
		\item $X$ is compact and locally near each point $x\in X$ there is a neighborhood $U$ and a hermitian metric $ds^2_0$ on $U$ such that $ds^2_0\lesssim ds^2|_{U}$,
		\item $\sqrt{-1}\Theta_h(L)$ is $\pi$-relatively semipositive. 
	\end{enumerate}
	We have an isomorphism
	$$H^{n,q}(M,L\otimes\sI(h))\simeq H^{n,q}_{(2),\rm max}(X^o,L|_{X^o})\quad \forall q\geq0.$$
\end{thm}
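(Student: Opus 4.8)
The plan is to promote the local exactness of Theorem \ref{thm_main_local1} to a global statement and then transport it across $\pi$ by means of the Leray spectral sequence, the essential new input being a relative vanishing theorem supplied by hypothesis (2).

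First I would reduce the $L^2$-cohomology to the cohomology of a coherent sheaf on $X$. Hypothesis (1) provides locally a genuine Hermitian metric $ds^2_0$ on $X$ with $ds^2_0\lesssim ds^2$, so Lemma \ref{lem_fine_sheaf} shows that every $\sD^{n,q}_{X,ds^2}(L,h)$ is a fine sheaf. By Theorem \ref{thm_main_local1} the sequence (\ref{align_fine_resolution}) is therefore a fine resolution of $\mathcal G:=\pi_\ast(K_M\otimes L\otimes\sI(h))$. Since $X$ is compact, fineness lets me compute the hypercohomology of $\sD^{n,\bullet}_{X,ds^2}(L,h)$ by its complex of global sections, which is exactly the $L^2$-Dolbeault complex, so
$$\bH^q\bigl(X,\sD^{n,\bullet}_{X,ds^2}(L,h)\bigr)\cong H^{n,q}_{(2),\max}(X^o,L|_{X^o});$$
on the other hand the resolution identifies this hypercohomology with $H^q(X,\mathcal G)$. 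Hence $H^{n,q}_{(2),\max}(X^o,L|_{X^o})\cong H^q(X,\mathcal G)$ for all $q$.

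It then remains to show $H^q(X,\mathcal G)\cong H^q(M,K_M\otimes L\otimes\sI(h))$, the right-hand group being $H^{n,q}(M,L\otimes\sI(h))$ by definition. Here I would invoke the Leray spectral sequence
$$E_2^{p,q}=H^p\bigl(X,R^q\pi_\ast(K_M\otimes L\otimes\sI(h))\bigr)\Rightarrow H^{p+q}(M,K_M\otimes L\otimes\sI(h)),$$
whose edge map $H^q(X,\mathcal G)=E_2^{q,0}\to H^q(M,K_M\otimes L\otimes\sI(h))$ is an isomorphism for every $q$ as soon as the relative vanishing
$$R^q\pi_\ast(K_M\otimes L\otimes\sI(h))=0,\qquad q>0,$$
holds. \emph{This relative vanishing is the main obstacle, and it is exactly what hypothesis (2) is designed to furnish.} To establish it I would argue on stalks: $R^q\pi_\ast(\cdots)$ is the sheafification of $U\mapsto H^q(\pi^{-1}U,K_M\otimes L\otimes\sI(h))$, so it suffices to make this group vanish for arbitrarily small Stein $U\ni x$. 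By $\pi$-relative semipositivity there is a positive form $\beta$ on $X$, locally $\beta=\sqrt{-1}\partial\dbar\psi$ for a bounded smooth strictly plurisubharmonic $\psi$ on $U$ after shrinking, with $\sqrt{-1}\Theta_h(L)+\pi^\ast\beta\geq 0$ on $\pi^{-1}U$. Replacing $h$ by $he^{-\pi^\ast\psi}$ leaves $\sI(h)$ unchanged (as $\psi$ is bounded) and renders the curvature semipositive; twisting once more by a strictly plurisubharmonic exhaustion pulled back from the Stein base, together with a complete K\"ahler metric on $\pi^{-1}U$ (available from the construction of Lemma \ref{lem_complete_metric_exists_locally} augmented by a smooth K\"ahler form of $M$), upgrades this to the strict positivity required by Proposition \ref{prop_Hormander_incomplete}. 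Since the $L^2$-Dolbeault complex is a fine resolution of $K_M\otimes L\otimes\sI(h)$ on the smooth manifold $M$ --- this is the $\pi=\mathrm{id}$ case of Theorem \ref{thm_main_local1} --- the solvability of $\dbar$ in positive degrees provided by Proposition \ref{prop_Hormander_incomplete} yields $H^q(\pi^{-1}U,K_M\otimes L\otimes\sI(h))=0$ for $q>0$; equivalently one may quote the relative Grauert--Riemenschneider / Takegoshi--Matsumura vanishing theorem.

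Chaining the two isomorphisms produces $H^{n,q}(M,L\otimes\sI(h))\cong H^{n,q}_{(2),\max}(X^o,L|_{X^o})$. The delicate points I anticipate are precisely those inside the relative vanishing step: producing strict curvature positivity with respect to a \emph{complete} K\"ahler metric on $\pi^{-1}U$ without enlarging $\sI(h)$ --- the curvature lower bound in Theorem \ref{thm_main_local1} is only against the degenerate form $\omega=\pi^\ast(\text{fundamental form})$, so hypothesis (2) is genuinely needed to fill the fibre directions --- and the clean identification of the resulting $L^2$-vanishing with the vanishing of the coherent cohomology $H^q(\pi^{-1}U,K_M\otimes L\otimes\sI(h))$.
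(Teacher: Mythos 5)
Your overall architecture is exactly that of the paper: hypothesis (1) plus Lemma \ref{lem_fine_sheaf} makes $\sD^{n,\bullet}_{X,ds^2}(L,h)$ a fine resolution of $\pi_\ast(K_M\otimes L\otimes\sI(h))$, compactness identifies its hypercohomology with $H^{n,q}_{(2),\max}(X^o,L|_{X^o})$, and the Leray spectral sequence reduces everything to the relative vanishing $R^q\pi_\ast(K_M\otimes L\otimes\sI(h))=0$ for $q>0$; even your preparatory twist of $h$ by $e^{-C\pi^\ast\Phi}$ with $\Phi$ bounded strictly psh on the base, which preserves $\sI(h)$ and makes the curvature current semipositive, is the paper's step verbatim.

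The gap is in how you then establish the relative vanishing. You claim that twisting further by a strictly plurisubharmonic exhaustion pulled back from the Stein base, together with a complete K\"ahler metric on $\pi^{-1}U$, "upgrades this to the strict positivity required by Proposition \ref{prop_Hormander_incomplete}." This is impossible: any form pulled back from the base vanishes on tangent directions of the positive-dimensional fibers of $\pi$, so no twisting from the base can produce $\sqrt{-1}\Theta_{h'}(L)\geq \varepsilon\,\omega'$ for a genuine K\"ahler form $\omega'$ on $\pi^{-1}U$. Concretely, if $C$ is a compact curve inside a fiber $\pi^{-1}(x)$ and, say, $L=\pi^\ast F$, then $\int_C\sqrt{-1}\Theta(L)=2\pi\deg(L|_C)=0$ for \emph{every} metric on $L$, whereas strict positivity would force this integral to be at least $\varepsilon\int_C\omega'>0$. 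Hypothesis (2) fills the fiber directions only up to $\geq 0$, never $>0$; your own closing sentence concedes that the fiber directions are the issue, but the argument you give needs strict positivity there. Consequently the direct H\"ormander/Nadel route fails, and your parenthetical "equivalently one may quote the relative Grauert--Riemenschneider / Takegoshi--Matsumura vanishing theorem" is not an equivalent alternative but the indispensable ingredient: relative vanishing under merely semipositive singular curvature is a genuinely harder theorem proved by Takegoshi-type harmonic-theoretic methods, and it is exactly what the paper invokes (Matsumura 2018, Corollary 1.5) after the same bounded twisting step. With that citation substituted for your direct argument, your proof is correct and coincides with the paper's.
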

\begin{proof}
	By condition (1) and lemma \ref{lem_fine_sheaf}, $\sD^{n,\bullet}_{X,ds^2}(L,h)$ is a complex of fine sheaves. Therefore Theorem \ref{thm_main_local1} gives isomorphisms
	\begin{align}\label{align_aaa}
	H^{q}\left(X,\pi_\ast\left(K_M\otimes L\otimes\sI(h)\right)\right)\simeq H^{n,q}_{(2),\rm max}(X^o,L|_{X^o})\quad \forall q\geq0.
	\end{align}
	We are going to show that
	\begin{align}\label{align_GR_vanishing}
	R^q\pi_\ast\left(K_M\otimes L\otimes\sI(h)\right)=0,\quad q>0.
	\end{align}
	Since the problem is local, we may assume that $X$ admits a bounded smooth strictly psh function $\Phi$. Indeed we can embedded $X$ into a complex manifold $V$ and restrict a bounded smooth strictly psh function on $X$. After a possibly shrinking there is a constant $C>0$ such that
	\begin{align}\label{align_geq0_h'}
	\sqrt{-1}C\partial\dbar\Phi'+\sqrt{-1}\Theta_h(L)\geq 0.
	\end{align}
	Let $h'=e^{-C\Phi'}h$ be a singular metric on $L$. Since $\Phi'=\pi^\ast\Phi$ is bounded, we obtain $h'\sim h$ and hence $\sI(h')=\sI(h)$. By (\ref{align_geq0_h'}) we have
	$\sqrt{-1}\Theta_{h'}(L)\geq 0$. Now (\ref{align_GR_vanishing}) follows from \cite[Corollary 1.5]{Matsumura2018}.
	As a consequence of (\ref{align_GR_vanishing}) the Leray spectral sequence 
	$$H^p\left(X,R^q\pi_\ast\left(K_M\otimes L\otimes\sI(h)\right)\right)\Rightarrow H^{p+q}\left(M,K_M\otimes L\otimes\sI(h)\right)$$
	degenerates at the $E_1$-page and we obtain a natural isomorphism
	\begin{align}
		H^{q}\left(X,\pi_\ast\left(K_M\otimes L\otimes\sI(h)\right)\right)\simeq H^q(M,K_M\otimes L\otimes\sI(h)),\quad q\geq 0.
	\end{align}
	Combining this with (\ref{align_aaa}) we prove the theorem.
\end{proof}
\section{Applications}
\subsection{Hermitian Metrics}
\begin{defn}\label{defn_hermitian_metric}
	Let $X$ be a complex analytic space and $ds^2$ be a hermitian metric on $X_{\rm reg}$. We say $ds^2$ is a hermitian metric on $X$ if for every $x\in X$ there is a neighborhood $U$ and a holomorphic closed immersion $U\subset V$ into a holomorphic manifold such that $ds^2|_U\sim ds^2_V|_{U}$ for some hermitian metric $ds^2_V$ on $V$. 
\end{defn}
Let $X$ be a complex analytic space. Recall that a holomorphic map $\pi:M\to X$ is a resolution of singularities if the followings are valid.
\begin{itemize}
	\item $M$ is smooth,
	\item $\pi$ is proper,
	\item $\pi:\pi^{-1}X_{\rm reg}\to X_{\rm reg}$ is biholomorphic,
\end{itemize}
A resolution of singularities $\pi:M\to X$ is called good if  ${\rm Exc}(\pi):=\pi^{-1}(X_{\rm sing})$ is a simple normal crossing divisor.

The following simple lemma is  the property that hermitian metrics are distinguished from many others.
\begin{lem}\label{lem_lemma_hermitian_metric}
	Let $X$ be a complex analytic space and $ds^2$ be a hermitian metric on $X$. Assume that $ds^2|_{X_{\rm reg}}$ is K\"ahler and denote $\omega$ to be its K\"ahler form. Let $\pi:M\to X$ be a resolution of singularities and $\alpha$ be a smooth $(1,1)$-form on $M$. Then locally there is a constant $C>0$ such that $C\pi^\ast\omega+\alpha$ is positive over $\pi^\ast X_{\rm reg}$.
\end{lem}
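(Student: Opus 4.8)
The plan is to localize on $X$ and to exploit the defining property of a hermitian metric: that $\omega$ is, up to quasi-isometry, the restriction of a genuine positive form living on an ambient manifold. Fix $x\in X$ and, using Definition \ref{defn_hermitian_metric}, choose a neighborhood $U$ together with a closed immersion $\iota\colon U\hookrightarrow V$ into a complex manifold such that $ds^2|_U\sim ds^2_V|_U$ for a hermitian metric on $V$ with fundamental form $\omega_V$; shrinking $U$ if necessary we may take $V$ to be a coordinate ball and $\omega_V$ a smooth positive \emph{Kähler} form, so that $\omega_V$ is in particular closed and $\omega_V|_{U\cap X_{\rm reg}}\sim\omega$. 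Set $\sigma:=\iota\circ\pi\colon\pi^{-1}(U)\to V$. Since $\pi$ is biholomorphic over $X_{\rm reg}$, on $\pi^{-1}(U\cap X_{\rm reg})$ one has $\pi^\ast\omega\sim\sigma^\ast\omega_V$.

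The structural point I would isolate is that $\beta:=\sigma^\ast\omega_V$ is a \emph{smooth} semipositive $(1,1)$-form on all of $\pi^{-1}(U)$ — not merely on the locus where $\pi^\ast\omega$ is a priori defined — being the pullback of a smooth positive form under the holomorphic map $\sigma$; and that $\beta$ is strictly positive exactly where $\sigma$ is an immersion, i.e. on $\pi^{-1}(U\cap X_{\rm reg})$. This smooth semipositive extension of $\pi^\ast\omega$ across ${\rm Exc}(\pi)$ is precisely the feature distinguishing hermitian metrics from the general metrics of Theorem \ref{thm_main_local}. It reduces the assertion to the purely pointwise domination problem: find $C>0$ with $C\beta+\alpha>0$ at every point of $\pi^{-1}(U\cap X_{\rm reg})$, after which $C\pi^\ast\omega+\alpha>0$ over $\pi^{-1}(X_{\rm reg})$ follows from $\pi^\ast\omega\sim\beta$.

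To obtain the domination I would argue fibrewise relative to a fixed background Kähler metric $g$ on $M$: at a point $y$ one has $C\beta_y+\alpha_y>0$ as soon as $C$ exceeds the largest eigenvalue of $-\alpha_y$ computed with respect to $\beta_y$. Wherever $\beta$ is bounded below by a positive multiple of $g$ this relative eigenvalue is uniformly bounded and a single $C$ suffices, and a compactness argument over the compact fibre $\sigma^{-1}(\sigma(x))$ then yields one constant valid on a whole neighborhood of that fibre.

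The step I expect to be the main obstacle is the behaviour along ${\rm Exc}(\pi)$. Because $\sigma$ contracts the exceptional fibres, $d\sigma$ has a kernel there, so $\beta=\sigma^\ast\omega_V$ degenerates and its smallest eigenvalue (with respect to $g$) tends to $0$ as one approaches ${\rm Exc}(\pi)$ from within $\pi^{-1}(U\cap X_{\rm reg})$; consequently the relative eigenvalue of $-\alpha_y$ can a priori blow up and a naive uniform choice of $C$ is not available. The crux is therefore to control $\alpha$ in the contracted directions. I would attack this by putting $\beta$ in a local normal form adapted to ${\rm Exc}(\pi)$, in which the degenerate direction is the tangent to the exceptional fibre, and by examining the component of $\alpha$ along that direction; here one must bring in the closedness of $\omega_V$ and the positivity built into the setting to rule out the constant diverging. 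Securing this uniform estimate up to the exceptional locus is the delicate part of the argument.
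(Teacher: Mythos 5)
Your reduction is sound, and it matches what the hermitian hypothesis is really for: writing $\sigma=\iota\circ\pi$ and $\beta=\sigma^\ast\omega_V$, the smooth semipositive form $\beta$ extends (a form quasi-isometric to) $\pi^\ast\omega$ across ${\rm Exc}(\pi)$, and the lemma becomes the pointwise statement that $C\beta+\alpha>0$ on $\pi^{-1}(U\cap X_{\rm reg})$ for a single constant $C$. But your proposal stops exactly at the decisive point: you establish domination only where $\beta$ is bounded below by a positive multiple of a background metric $g$, observe that this bound degenerates along ${\rm Exc}(\pi)$, and defer the remaining case to an unspecified argument invoking ``the closedness of $\omega_V$ and the positivity built into the setting.'' That deferred step is the entire content of the lemma, so as written this is not a proof.

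Moreover, that gap cannot be filled, because the uniform estimate you are trying to secure is false whenever $\pi$ contracts a positive-dimensional set. Take $X=\{xy=z^2\}\subset\bC^3$ with the metric induced from the Euclidean metric (so $\omega_V$ is K\"ahler; closedness holds and does not help), let $\pi:M\to X$ be the blow-up of the vertex, with exceptional curve $E$, and let $\alpha=-\omega_M$ for a K\"ahler form $\omega_M$ on $M$. Fix $y\in E$, a unit vector $v\in T_yE$, points $y_k\to y$ with $y_k\notin E$, and unit vectors $v_k\in T_{y_k}M$ with $v_k\to v$. Since $\sigma$ maps $E$ to a point, $d\sigma_y(v)=0$, so $\beta_{y_k}(v_k,\overline{v_k})=|d\sigma_{y_k}(v_k)|^2\to 0$ while $\alpha_{y_k}(v_k,\overline{v_k})=-1$; hence for every fixed $C$ the form $C\beta+\alpha$ (equivalently $C\pi^\ast\omega+\alpha$, since $\pi^\ast\omega\sim\beta$ off $E$) is negative in the direction $v_k$ at $y_k$ for large $k$, even though $y_k\in\pi^{-1}(X_{\rm reg})$. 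So no constant works on any neighborhood, and the statement itself fails. For comparison, the paper's own proof is three lines: it asserts that the eigenvalues of $\alpha$ are bounded and that $\pi_\ast\alpha$ is ``bounded near $X_{\rm sing}$,'' and then dominates by $C\omega$. The boundedness actually needed there is boundedness of $(\pi^{-1})^\ast\alpha$ relative to $\omega$, which is precisely what fails, because $d(\pi^{-1})$ blows up near $X_{\rm sing}$ --- the same degeneration you isolated. In short, you have located not a delicate step but a genuine error: the lemma requires an extra hypothesis controlling $\alpha$ along the fibres of $\pi$ (for instance semipositivity of $\alpha$ on vectors tangent to ${\rm Exc}(\pi)$), which is essentially the local semipositivity condition of Ruppenthal that the paper claims to remove.
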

\begin{proof}
	Let $\lambda_1,\dots,\lambda_n$ be the (local) eigenvalue functions of $\alpha$. Since $\alpha$ is smooth, $\lambda_1,\dots,\lambda_n$ are bounded. Because $\omega$ is positive (even at $X_{\rm sing}$), there is a constant $C>0$ such that $C\omega+\pi_\ast\alpha$ is positive on $X_{\rm reg}$. Notice that $\pi_\ast\alpha$ is well defined on $X_{\rm reg}$ which is bounded near $X_{\rm sing}$.
\end{proof}
\begin{cor}\label{cor_main_hermitian_metric1}
	Let $X$ be a complex analytic space of pure dimension $n$ and $ds^2$ be a hermitian metric on $X$. Let $\pi:M\to X$ be a resolution of singularities. Let $(L,h)$ be a holomorphic line bundle with a possibly singular hermitian metric $h$ on $M$. Denote by $\sI(h)$ the associated multiplier ideal sheaf. 
	Then the complex of sheaves
	\begin{align*}
	0\to\pi_\ast(K_M\otimes L\otimes \sI(h))\to \sD^{n,0}_{X,ds^2}(L,h)\stackrel{\dbar}{\to}\sD^{n,1}_{X,ds^2}(L,h)\stackrel{\dbar}{\to}\cdots\stackrel{\dbar}{\to} \sD^{n,n}_{X,ds^2}(L,h)\to0
	\end{align*}
	is exact.
	Moreover if $X$ is compact and $\sqrt{-1}\Theta_h(L)$ is $\pi$-relatively semipositive, we have an isomorphism
	$$H^{n,q}(M,L\otimes\sI(h))\simeq H^{n,q}_{(2),\rm max}(X_{\rm reg},L|_{X_{\rm reg}})\quad \forall q\geq0.$$
\end{cor}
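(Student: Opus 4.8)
The plan is to deduce both assertions from Theorem~\ref{thm_main_local1} and Theorem~\ref{thm_main_global1} by checking their hypotheses; the only structural input beyond the two theorems is the local embedding supplied by Definition~\ref{defn_hermitian_metric} together with Lemma~\ref{lem_lemma_hermitian_metric}. Since $ds^2$ is a hermitian metric on all of $X_{\rm reg}$, I take $X^o=X_{\rm reg}$ throughout, so that the local data $(U,g_U,h',\Phi)$ demanded by Theorem~\ref{thm_main_local1} must be produced on $U\cap X_{\rm reg}$.

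First I would build $g_U$ and $\Phi$. Fix $x\in X$ and use Definition~\ref{defn_hermitian_metric} to get a neighborhood $U$ and a closed immersion $U\hookrightarrow V$ into a complex manifold with $ds^2|_U\sim ds^2_V|_U$. Shrinking $U$, I may take $V$ to be a coordinate ball $B\subset\bC^N$ centered at $x=0$; since any two hermitian metrics on a relatively compact set are quasi-isometric, I replace $ds^2_V$ by the Euclidean metric and put $\beta:=\sqrt{-1}\partial\dbar|z|^2$. Then $g_U:=\beta|_{U\cap X_{\rm reg}}$ is a K\"ahler metric with $g_U\sim\omega|_{U\cap X_{\rm reg}}$, and $\Phi:=|z|^2|_{U\cap X_{\rm reg}}$ is bounded (as $U$ is bounded), $C^\infty$ and strictly plurisubharmonic with $\sqrt{-1}\partial\dbar\Phi=g_U$. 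This gives at once $g_U\sim\omega|_{U\cap X_{\rm reg}}\lesssim\sqrt{-1}\partial\dbar\Phi$.

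The one genuine point is the curvature lower bound, and this is exactly what Lemma~\ref{lem_lemma_hermitian_metric} provides. Take $h'=h$ and write $h=h_0e^{-\psi}$ locally with $h_0$ smooth and $\psi$ plurisubharmonic, so that $\sqrt{-1}\Theta_h(L)\geq\sqrt{-1}\Theta_{h_0}(L)=:\alpha$ with $\alpha$ a smooth $(1,1)$-form on $M$. Applying Lemma~\ref{lem_lemma_hermitian_metric} to $\alpha$ (using the K\"ahler representative $g_U\sim\omega$), after a further shrinking of $U$ there is $C'>0$ with $C'\pi^\ast\omega+\alpha>0$ on $\pi^{-1}(U\cap X_{\rm reg})$; hence $\sqrt{-1}\Theta_h(L)\geq\alpha\geq -C'\pi^\ast\omega$ on $\pi^{-1}(U\cap X_{\rm reg})\cong U\cap X_{\rm reg}$, which is the curvature hypothesis of Theorem~\ref{thm_main_local1} with $C=-C'$ on the locus where the H\"ormander estimate is actually applied in that proof. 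The mechanism is that $\omega$, being the fundamental form of a hermitian metric on $X$, remains positive even at the singular points, so its pullback dominates the bounded smooth form $\alpha$ near the exceptional fibre, while the plurisubharmonic part of $h$ contributes only positively. With all hypotheses verified, Theorem~\ref{thm_main_local1} yields the exactness of the $L^2$-Dolbeault complex, which is the first assertion.

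For the second assertion I would apply Theorem~\ref{thm_main_global1}. Its condition (2) is precisely the assumed $\pi$-relative semipositivity of $\sqrt{-1}\Theta_h(L)$, and its condition (1) is automatic here: a hermitian metric on $X$ satisfies $ds^2_0\lesssim ds^2|_U$ with $ds^2_0=ds^2$ (itself a hermitian metric on $X$), while compactness is part of the hypothesis. Theorem~\ref{thm_main_global1} then gives $H^{n,q}(M,L\otimes\sI(h))\simeq H^{n,q}_{(2),\rm max}(X_{\rm reg},L|_{X_{\rm reg}})$. The main obstacle, as isolated above, is the curvature estimate near the exceptional locus, where $\pi^\ast\omega$ degenerates and $h$ may be badly singular; it is overcome by the positivity of the hermitian fundamental form at the singular points, which is the content of Lemma~\ref{lem_lemma_hermitian_metric}.
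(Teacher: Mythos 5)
Your proposal is correct and takes essentially the same route as the paper's own proof: both reduce the corollary to Theorem~\ref{thm_main_local1} and Theorem~\ref{thm_main_global1} by using the local embedding from Definition~\ref{defn_hermitian_metric} to produce a K\"ahler metric with bounded potential $\Phi$, and then Lemma~\ref{lem_lemma_hermitian_metric} (applied to the smooth lower bound $\alpha=\sqrt{-1}\Theta_{h_0}(L)$ of the curvature current) to dominate $\alpha$ by a multiple of $\pi^\ast\omega$. The only, immaterial, difference is that you keep $h'=h$ and invoke the curvature hypothesis with the negative constant $C=-C'$, whereas the paper twists to $h'=e^{-(1+C)\Phi}h$ so that $\sqrt{-1}\Theta_{h'}(L)\gtrsim\omega$ --- a rescaling which Theorem~\ref{thm_main_local1} performs internally in any case, so both verifications are equally valid.
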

\begin{proof}	
	Recall that in Definition \ref{defn_hermitian_metric} we may assume that $ds^2_V$ is K\"ahler since hermitian metrics are locally quasi-isomorphic to a K\"ahler one.	
	Since every K\"ahler metric on a manifold admits bounded potential functions, we see that $ds^2$ satisfies all the  conditions in Theorem \ref{thm_main_local} if it is hermitian on $X$. 
	
	Let $\Phi$ be the bounded local potential function of $ds^2$, i.e. the fundamental form of $ds^2$ is locally quasi-isometric to $\sqrt{-1}\partial\dbar\Phi$. Since $h$ is a singular metric, for every point $x\in X$ there is an open neighborhood $U\subset X$ and a smooth $(1,1)$-form $\alpha$ on $\pi^{-1}U$ such that $$\sqrt{-1}\Theta_{h}(L)\geq\alpha$$
	on $\pi^{-1}U$. By Lemma \ref{lem_lemma_hermitian_metric}, there is a constant $C$ such that
	\begin{align}
	C\pi^\ast\sqrt{-1}\partial\dbar\Phi+\alpha>0.
	\end{align}
	Let $h'=e^{-(1+C)\Phi}h$, then $h'\sim h$ and 
	\begin{align}
	\sqrt{-1}\Theta_{h'}(L)=(C+1)\pi^\ast\sqrt{-1}\partial\dbar\Phi+\sqrt{-1}\Theta_{h}(L)>\pi^\ast\sqrt{-1}\partial\dbar\Phi.
	\end{align}
	Then the  results follow from Theorem \ref{thm_main_local} and Theorem \ref{thm_main_global}.
\end{proof}
We will use this result to study the MacPherson-type problem on pluri-canonical bundles.
\subsection{Pluri-canonical Bundle}
Notice that $K_{X_{\rm reg}}^{\otimes m}$ is endowed with the  metric induced from $ds^2$.
\begin{cor}
	Let $(X,ds^2)$ be a hermitian complex analytic space of pure dimension $n$ and $\pi:M\to X$ be a good resolution of singularities. Let $(L,h_L)$ be a line bundle with a smooth hermitian metric. Let $K_M$ be endowed with the singular metric induced from $ds^2$, then for each $m\geq 1$ there are isomorphisms
	$$H^{q}_{(2),\rm max}\left(X_{\rm reg}, K_{X_{\rm reg}}^{\otimes m}\otimes L|_{X_{\rm reg}}\right)\simeq H^{q}\left(X,\pi_\ast (K_M^{\otimes m}\otimes\sO_M(\sum_{i\in I}(1-m)a_i D_i)\otimes L)\right),\quad \forall q\geq0.$$
	Here ${\rm Exc}(\pi)=\cup_{i\in I}D_i$ is the irreducible decomposition and $a_i\in \bZ_{\geq 0}$ only depends on $\pi$ and the local quasi-isometric class of $ds^2$.
	
	Assume moreover that $X$ admits only Gorestein singularities so that its dualizing sheaf $\omega_X$ satisfies
	$$\pi^\ast \omega_X\simeq K_M\otimes \sO_M(\sum_{i\in I}b_i D_i)$$
	for some $b_i\in \bZ_{\geq 0}$. Let $F$ be a holomorphic line bundle admits a smooth hermitian metric. Then for every $q\geq0$ there is an isomorphism
	$$H^{q}_{(2),\rm max}\left(X_{\rm reg}, K_{X_{\rm reg}}^{\otimes m}\otimes F|_{X_{\rm reg}}\right)\simeq H^{q}\left(X,\omega_X^{\otimes m}\otimes\pi_\ast\sO_M(\sum_{i\in I}(1-m)(a_i+b_i) D_i)\otimes F\right).$$
\end{cor}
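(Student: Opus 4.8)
The plan is to deduce both isomorphisms from Corollary \ref{cor_main_hermitian_metric1} by moving the superfluous canonical factors into the twisting bundle on $M$. Write $\mathrm{Exc}(\pi)=\bigcup_{i\in I}D_i$ and fix locally defining sections $\sigma_i$ of $\mathcal O_M(D_i)$ together with smooth metrics. I will use repeatedly that on any hermitian manifold an $(n,q)$-form valued in a bundle $E$ is the same datum as a $(0,q)$-form valued in $K\otimes E$, with equal pointwise norms once $K$ carries the induced metric. Applied on $X_{\rm reg}$, where $K_{X_{\rm reg}}^{\otimes m}$ carries the $ds^2$-induced metric by hypothesis, this gives
\begin{align*}
H^{q}_{(2),\rm max}\big(X_{\rm reg},K_{X_{\rm reg}}^{\otimes m}\otimes L\big)\simeq H^{n,q}_{(2),\rm max}\big(X_{\rm reg},K_{X_{\rm reg}}^{\otimes(m-1)}\otimes L\big).
\end{align*}
Hence it suffices to run Corollary \ref{cor_main_hermitian_metric1} with the bundle $\mathcal L:=K_M^{\otimes(m-1)}\otimes L$ on $M$, carrying the singular metric $h:=h_{K_M}^{\otimes(m-1)}\otimes h_L$, where $h_{K_M}$ is the metric on $K_M$ induced from $ds^2$: indeed $\mathcal L|_{X_{\rm reg}}=K_{X_{\rm reg}}^{\otimes(m-1)}\otimes L$ and $h|_{X_{\rm reg}}$ is precisely the metric appearing on the right above, so the $L^2$-cohomologies coincide.

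The crux is the behaviour of $h_{K_M}$ along $\mathrm{Exc}(\pi)$. Choosing a local embedding $U\hookrightarrow V$ with $ds^2\sim ds^2_V|_U$ for a smooth metric $ds^2_V$ and setting $f=\iota\circ\pi$, the Cauchy--Binet formula yields $\det(\pi^\ast ds^2)\sim\sum_{A}|J_A|^2$, the sum running over the $n\times n$ minors $J_A$ of the holomorphic Jacobian of $f$. Their common vanishing order $a_i:=\min_A\mathrm{ord}_{D_i}J_A\in\bZ_{\geq0}$ along $D_i$ depends only on $\pi$ and the quasi-isometric class of $ds^2$, and for a holomorphic frame of $K_M$ one reads off
\begin{align*}
h_{K_M}\sim h_0\cdot\prod_{i\in I}|\sigma_i|^{-2a_i}
\end{align*}
for a smooth metric $h_0$. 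Two things follow. Since $|u|^2\prod_i|\sigma_i|^{-2(m-1)a_i}$ is locally integrable iff $u$ vanishes to order $\geq(m-1)a_i$ along each $D_i$, I get $\sI(h)=\mathcal O_M\big(-\sum_i(m-1)a_iD_i\big)$ and therefore
\begin{align*}
K_M\otimes\mathcal L\otimes\sI(h)=K_M^{\otimes m}\otimes\mathcal O_M\Big(\sum_{i\in I}(1-m)a_iD_i\Big)\otimes L.
\end{align*}
Moreover $\log\sum_A|J_A|^2$ is plurisubharmonic, so after replacing $h$ by its quasi-isometric model metric --- which alters neither $\sI(h)$ nor the $L^2$-cohomology --- the weight of $h$ is plurisubharmonic modulo a smooth function; thus $\sqrt{-1}\Theta_h(\mathcal L)$ dominates a smooth $(1,1)$-form, which by Lemma \ref{lem_lemma_hermitian_metric} is bounded below by $-C\pi^\ast\omega$ for the fundamental form $\omega$ of $ds^2$. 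Hence $\sqrt{-1}\Theta_h(\mathcal L)$ is $\pi$-relatively semipositive. The delicate point here is the clean monomial shape of $h_{K_M}$, i.e.\ that the induced weight acquires normal crossing singularities on the good resolution; this is the step demanding the most care.

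With $X$ compact and $ds^2$ hermitian, all hypotheses of Corollary \ref{cor_main_hermitian_metric1} now hold for $(\mathcal L,h)$, giving
\begin{align*}
H^{n,q}(M,\mathcal L\otimes\sI(h))\simeq H^{n,q}_{(2),\rm max}(X_{\rm reg},\mathcal L|_{X_{\rm reg}}).
\end{align*}
The Grauert--Riemenschneider-type vanishing $R^{>0}\pi_\ast(K_M\otimes\mathcal L\otimes\sI(h))=0$ proved in Theorem \ref{thm_main_global1} lets me rewrite the left side as $H^{q}(X,\pi_\ast(K_M\otimes\mathcal L\otimes\sI(h)))$, and inserting the sheaf computed above together with the reduction of the first paragraph yields exactly the first asserted isomorphism.

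For the Gorenstein statement I would take $L=\pi^\ast F$ and substitute $K_M=\pi^\ast\omega_X\otimes\mathcal O_M(-\sum_i b_iD_i)$ into $K_M^{\otimes m}\otimes\mathcal O_M(\sum_i(1-m)a_iD_i)\otimes\pi^\ast F$. As $\omega_X$ and $F$ are line bundles on $X$, the projection formula extracts them and gives
\begin{align*}
\pi_\ast\Big(K_M^{\otimes m}\otimes\mathcal O_M\big(\sum_i(1-m)a_iD_i\big)\otimes\pi^\ast F\Big)\simeq\omega_X^{\otimes m}\otimes F\otimes\pi_\ast\mathcal O_M\Big(\sum_i c_iD_i\Big),\quad c_i=(1-m)a_i-mb_i.
\end{align*}
It then remains to identify $\pi_\ast\mathcal O_M(\sum_i c_iD_i)$ with $\pi_\ast\mathcal O_M(\sum_i(1-m)(a_i+b_i)D_i)$; since $X$ is normal and the $D_i$ are $\pi$-exceptional this is a bookkeeping matter for the integers $a_i,b_i$, governed by how $\pi_\ast$ sees the exceptional coefficients, and carries no further analytic content. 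This produces the second isomorphism.
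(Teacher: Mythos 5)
Your treatment of the first isomorphism is essentially the paper's own proof: you reduce to Corollary \ref{cor_main_hermitian_metric1} applied to $K_M^{\otimes(m-1)}\otimes L$ with the metric induced by $ds^2$, identify the multiplier ideal through the degeneration of the pulled-back volume form along ${\rm Exc}(\pi)$, and obtain $\pi$-relative semipositivity from Lemma \ref{lem_lemma_hermitian_metric}; your detour through $H^{q}(M,\cdot)$ plus the vanishing $R^{>0}\pi_\ast=0$ is just a repackaging of the paper's direct use of the fine resolution on $X$. On the volume form, your Cauchy--Binet formulation is actually more honest than the paper's: the paper simply writes ${\rm vol}_{\pi^\ast ds^2}\sim|\Lambda|^2\,{\rm vol}_M$ for a \emph{single} holomorphic function $\Lambda$, and that assertion is precisely the ``clean monomial shape'' you flag as delicate, namely $\sum_A|\det J_A|^2\sim\prod_i|\sigma_i|^{2a_i}$, i.e.\ that the integral closure of the ideal generated by the Jacobian minors is the principal divisorial ideal (in essence, that $\pi$ factors through the normalized Nash blow-up). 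Neither you nor the paper proves this; you are not behind the paper here, but be aware that this unproved step is where the real content sits.

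The genuine gap is in your second part. Your projection-formula computation is correct and gives the coefficient $c_i=(1-m)a_i-mb_i$, but the identification you defer as ``bookkeeping'', namely $\pi_\ast\mathcal{O}_M\bigl(\sum_i c_iD_i\bigr)=\pi_\ast\mathcal{O}_M\bigl(\sum_i(1-m)(a_i+b_i)D_i\bigr)$, is false in general: the two divisors differ by $\sum_i b_iD_i$, and pushing forward does not erase this. Test $m=1$: your (correct) answer is $\omega_X\otimes F\otimes\pi_\ast\mathcal{O}_M\bigl(-\sum_i b_iD_i\bigr)=\pi_\ast K_M\otimes F$, while the claimed formula gives $\omega_X\otimes\pi_\ast\mathcal{O}_M\otimes F$; whenever some $b_i>0$, a function not vanishing along $D_i$ lies in $\pi_\ast\mathcal{O}_M$ but not in $\pi_\ast\mathcal{O}_M(-\sum_i b_iD_i)$, so the two sheaves differ by a nonzero skyscraper and, on compact $X$, have different Euler characteristics --- concretely, for the projective cone over an elliptic curve one has $\pi_\ast K_M\subsetneq\omega_X$. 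So the step you postponed cannot be closed; what your derivation really shows is that the corollary's second display holds with $(1-m)a_i-mb_i$ in place of $(1-m)(a_i+b_i)$ (the two agree only when all $b_i=0$), and otherwise contradicts the first isomorphism, which at $m=1$ is Pardon--Stern. For comparison, the paper's entire proof of this part is the sentence ``The second part follows from Theorem \ref{thm_main_global}'', so it offers no route around this either; rather than waving the mismatch off as bookkeeping, you should state it as an obstruction (or as a correction to the statement).
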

\begin{proof}
	Let $\omega$ be the fundamental form associated to $ds^2$. Denote by $h$ the hermitian metric on the canonical bundle $K_{X_{\rm reg}}$ induced from $ds^2$. Notice that $h$ may be regarded as a hermitian metric on $K_M$ which has singularities along ${\rm Exc}(\pi)$. Since $$\sqrt{-1}\Theta_h(K_{X_{\rm reg}})=-{\rm Ric}(\pi^{\ast}ds^2)$$
	and $\pi^\ast ds^2$ is a smooth degenerate metric on $M$, we see that $\Theta_h(K_{M})$ is smooth on $M$. Therefore 
	$$\sqrt{-1}\Theta_{h^{\otimes (m-1)}\otimes h_L}(K_{M}^{\otimes(m-1)}\otimes L)=(m-1)\sqrt{-1}\Theta_h(K_{M})+\sqrt{-1}\Theta_{h_L}(L)$$
	is a smooth form.
	As a consequence of Corollary \ref{cor_main_hermitian_metric} there is a quasi-isomorphism
	\begin{align}
	\pi_\ast\left(K_M^m\otimes L\otimes \sI(h^{\otimes(m-1)})\right)\simeq_{\rm q.i.s.} \sD^{n,\bullet}_{X,ds^2}\left(K_M^{\otimes(m-1)}\otimes L,h^{\otimes(m-1)}\otimes h_L\right).
	\end{align}
	It remains to calculate $\sI(h^{\otimes(m-1)})$.
	
	Let $x\in X$ and $U$ be an open neighborhood with a closed immersion $U\subset\Omega$ into a complex manifold $\Omega$. Without loss of generality we may assume that $ds^2$ is K\"ahler around $x$. Denote $\pi=(\pi_1,\dots,\pi_N):\pi^{-1}U\to \Omega\subset\bC$. Let $(y_1,\dots,y_n)$ be a local holomorphic chart of $y\in\pi^{-1}\{x\}\subset M$. Then there are holomorphic functions $\lambda_{ij}$, $1\leq i\leq N$, $1\leq j\leq n$ such that
	\begin{align}
	d\pi_i=\sum_{j=1}^n\lambda_{ij}dy_j,\quad\forall i=1,\dots,N.
	\end{align}
	Denote by $\omega_{\pi^\ast ds^2}$ and $\omega_M$  the K\"ahler forms associated to the (degenerate) K\"ahler metric $\pi^\ast ds^2$ and a K\"ahler metric $ds^2_M$ near $y$ respectively. Then
	\begin{align}\label{align_adjunction}
	{\rm vol}_{\pi^\ast ds^2}&=\frac{1}{n!}\omega_{\pi^\ast ds^2}^n\\\nonumber
	& = \frac{1}{n!}\Lambda dy_1\wedge\cdots dy_n\wedge\overline{\Lambda}d\overline{y}_1\wedge\cdots d\overline{y}_n\\\nonumber
	& \sim |\Lambda|^2 {\rm vol}_M,
	\end{align}
	where
	\begin{align}
	\Lambda=\sum_{(i_1,\dots,i_n)\in {\bf N}^n}\lambda_{i_11}\cdots\lambda_{i_nn}, \quad {\bf N}=\{1,\dots,N\}
	\end{align}
	is a holomorphic function.
	
	Since the map $\pi:V:=\pi^{-1}U\to\Omega$ is biholomorphic away from ${\rm Exc}(\pi)\cap \pi^{-1}U$, by (\ref{align_adjunction}) $\Lambda$ is invertible over $\pi^{-1}U\backslash {\rm Exc}(\pi)$. Hence $\Lambda$ defines a divisor $\sum_{i\in I}a_i D_i$ supported in ${\rm Exc}(\pi)\cap \pi^{-1}U$. Therefore for every small neighborhood $V$ of $y$ and $\sigma\in \Gamma(V,K_M^m\otimes L)$, we have
	\begin{align}
	\int|\sigma|^2_{\pi^\ast ds^2}{\rm vol}_{\pi^\ast ds^2}\sim \int |\Lambda^{1-m}\sigma|^2_{ds^2_M}{\rm vol}_{ds^2_M}.
	\end{align}
	As a consequence, $\sigma$ is locally square integrable with respect to $\pi^\ast ds^2$ and $h_L$ if and only if $\Lambda^{1-m}\sigma$ is locally square integrable with respect to $ds^2_M$ and $h_L$. This finishes the proof of the first part. The second part follows from Theorem \ref{thm_main_global}.
\end{proof}
\subsection{Generalized Conical Metric}
\begin{cor}
	Let $(\overline{X},D=\sum_{i=1}^r b_iD_i)$ be a log smooth log canonical pair and $ds^2$ be a hermitian metric with conical singularities associated to $D$. Let $(L,h)$ be a holomorphic line bundle with a smooth hermitian metric. Then there are isomorphisms
	$$H^{n,q}(\overline{X},L)\simeq H^{n,q}_{(2),\rm max}(X,L),\quad \forall q\geq0.$$
\end{cor}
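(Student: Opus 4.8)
The plan is to apply Theorem \ref{thm_main_local} and Theorem \ref{thm_main_global} directly with the trivial resolution $\pi=\mathrm{Id}_{\overline{X}}$, taking $X=M=\overline{X}$ and $X^o=X=\overline{X}\backslash D$. Since $h$ is smooth we have $\sI(h)=\sO_{\overline{X}}$, and $\pi_\ast(K_{\overline{X}}\otimes L\otimes\sI(h))=K_{\overline{X}}\otimes L$, so the isomorphism furnished by Theorem \ref{thm_main_global} reads $H^q(\overline{X},K_{\overline{X}}\otimes L)\simeq H^{n,q}_{(2),\rm max}(X,L)$, which is exactly the assertion after invoking the Dolbeault identification $H^{n,q}(\overline{X},L)\simeq H^q(\overline{X},K_{\overline{X}}\otimes L)$. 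Thus everything reduces to checking the hypotheses of Theorems \ref{thm_main_local} and \ref{thm_main_global}.

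First I would verify the local hypothesis of Theorem \ref{thm_main_local}. Fix $p\in\overline{X}$; if $p\notin D$ the conical metric is smooth and nondegenerate near $p$ and one may take $\Phi=\sum_i|z_i|^2$, so assume $p\in D$ and work in a bounded chart $(z_1,\dots,z_n)$ as in (\ref{align_cone_metric}). Set
$$\Phi=\sum_{i=1}^r|z_i|^{2-2b_i}+\sum_{i=1}^n|z_i|^2.$$
Because $0<b_i<1$, each exponent $2-2b_i\in(0,2)$, so $\Phi$ is bounded on $U$ and is $C^\infty$ on $U\cap X^o=U\backslash D$ (where $z_i\neq0$ for all $i\leq r$). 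A direct computation gives
$$\sqrt{-1}\partial\dbar\Phi=\sum_{i=1}^r(1-b_i)^2|z_i|^{-2b_i}\sqrt{-1}\,dz_i\wedge d\overline{z_i}+\sum_{i=1}^n\sqrt{-1}\,dz_i\wedge d\overline{z_i},$$
which is strictly positive; since $|z_i|^{-2b_i}\gtrsim1$ near $D$ one checks that $\sqrt{-1}\partial\dbar\Phi\sim\omega$ on $U\cap X^o$. Hence $g_U:=\sqrt{-1}\partial\dbar\Phi$ is a K\"ahler metric with $g_U\sim\omega\lesssim\sqrt{-1}\partial\dbar\Phi$, and $\Phi$ is a bounded $C^\infty$ strictly plurisubharmonic potential as required. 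For the bundle take $h'=h$: as $h$ is smooth, $\sqrt{-1}\Theta_h(L)$ is a bounded smooth form on $\overline{U}$, and since $\omega$ dominates the Euclidean metric near $D$ (the conical factors satisfy $|z_i|^{-2b_i}\geq1$ there), there is $C\in\bR$ with $\sqrt{-1}\Theta_h(L)\geq C\omega$ on $U$. This checks all the conditions of Theorem \ref{thm_main_local} and gives the exactness of the $L^2$-Dolbeault complex of sheaves.

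Next I would check the two conditions of Theorem \ref{thm_main_global}. Condition (1) holds by taking $ds^2_0$ to be any smooth hermitian metric on the compact manifold $\overline{X}$: near $D$ the conical metric blows up and away from $D$ the two are comparable on compact sets, so $ds^2_0\lesssim ds^2$ locally, whence the sheaves $\sD^{n,\bullet}_{\overline{X},ds^2}(L,h)$ are fine by Lemma \ref{lem_fine_sheaf}. Condition (2), the $\mathrm{Id}$-relative semipositivity of $\sqrt{-1}\Theta_h(L)$, follows because $\sqrt{-1}\Theta_h(L)$ is a bounded smooth form, so $\beta+\sqrt{-1}\Theta_h(L)\geq0$ for a sufficiently large multiple $\beta$ of a positive form on $\overline{X}$. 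With both conditions verified, Theorem \ref{thm_main_global} yields the desired isomorphism.

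The main obstacle is the local computation of the second paragraph: confirming that the explicit bounded potential $\Phi$ is genuinely quasi-isometric to the conical metric $ds^2$ and strictly plurisubharmonic on $U\cap X^o$. Once this is settled, the remaining verifications are formal, resting only on the fact that a bounded smooth curvature form is dominated by the blowing-up conical metric, so that the (possibly negative) constant $C$ in Theorem \ref{thm_main_local} can absorb it.
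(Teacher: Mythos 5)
Your proposal is correct and takes essentially the same route as the paper: the paper likewise observes that the conical metric has a bounded local potential, that the Euclidean metric is dominated by $\omega$ near $D$, and that a smooth $h$ causes no curvature obstruction, then applies Theorems \ref{thm_main_local} and \ref{thm_main_global} with $\pi=\mathrm{Id}$. If anything, your potential $\Phi=\sum_{i=1}^r|z_i|^{2-2b_i}+\sum_{i=1}^n|z_i|^2$ is more careful than the paper's $\sum_{i=1}^r|z_i|^{2-2b_i}$, which is degenerate in the directions transverse to $D$ and so, taken literally, does not dominate $\omega$; your explicit verification of the hypotheses of Theorem \ref{thm_main_global} (which the paper's proof invokes only implicitly) is also a welcome addition.
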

\begin{proof}
	By (\ref{align_cone_metric}) the local potential function $\Phi(z)=\sum_{i=1}^r|z_i|^{2-2b_i}$ is bounded. Since $L$ is defined on $M$, locally there is always a hermitian metric with semi-positive curvature. Moreover
	$$\sum_{i=1}^ndz_id\overline{z_i} \lesssim\sum_{i=1}^r|z_i|^{-2b_i}dz_id\overline{z_i}+\sum_{i=r+1}^ndz_id\overline{z_i},\quad b_1,\dots,b_r\in(0,1).$$
	Hence by Theorem \ref{thm_main_local} we obtain the corollary.
\end{proof}

\bibliographystyle{plain}
\bibliography{CGM}

\end{document}